\documentclass[11pt]{amsart} 

\usepackage{pdfsync}
\usepackage{lmodern}
\usepackage{amsthm,amsmath,amssymb,amscd,graphicx,enumerate, stmaryrd,xspace,verbatim, epic, eepic,color,url}
\usepackage{tikz}
\usepackage{pgfplots}
\usepackage{tikz-3dplot}

\newtheorem{thm}{Theorem}[section]
\newtheorem*{thm*}{Theorem}

\newtheorem{prop}[thm] {Proposition}     
\newtheorem{lemma} [thm]{Lemma}

\usepackage{hyperref}
\usepackage{pgf}

\newtheorem*{conjecture*}{Conjecture}
\newtheorem*{remark*}{Remark}
\newtheorem{question}{Question}

\theoremstyle{definition}

\newtheorem*{exercise*}{Exercise}

\newtheorem{defi}[thm] {Definition}

\newtheorem{remark} [thm]{Remark}

\usepackage{amsmath}
\usepackage{amssymb,amsthm}
\usepackage{amscd}
\usepackage{enumerate}
\input{xy}
\xyoption{all}

\newcounter{prg}[section]

\newcommand{\ov}{\overline}

\newcommand{\la}{\longrightarrow}

\newcommand{\A}{\mathbb{A}}
\newcommand{\C}{\mathbb{C}}
\newcommand{\F}{{\mathbb{F}}}
\newcommand{\G}{{\mathbb{G}}}
\newcommand{\N}{\mathbb{N}}
\newcommand{\PP}{\mathbb{P}}

\newcommand{\Z}{\mathbb{Z}}

\newcommand{\cE}{\mathcal {E}}

\newcommand{\Hyp}{\operatorname{Hyp}}

\newcommand{\mult}{{\operatorname{mult}}}

\def\md{\underline{d}}

\begin{document}

\title{Exceptional loci in algebraic surfaces}
\author{Lucia Caporaso and Amos Turchet}
\address[Caporaso]{Dipartimento di Matematica e Fisica\\ Universit\`{a} Roma Tre \\ Largo San Leonardo Murialdo \\I-00146 Roma\\  Italy }\email{lucia.caporaso@uniroma3.it}
\address[Turchet]{Dipartimento di Matematica e Fisica\\ Universit\`{a} Roma Tre \\ Largo San Leonardo Murialdo \\I-00146 Roma\\  Italy }\email{amos.turchet@uniroma3.it}
\begin{abstract}
	We study the algebraic exceptional set for surfaces $(S,B)$ of log general type, when $B$ has at least three irreducible components; we prove that in most cases it is finite or empty.
\end{abstract}
\subjclass[2020]{14H20, 14H45, 14G40}
\keywords{Hyperbolicity, hypertangency, algebraic curves.}
\maketitle

\tableofcontents

\section{Introduction}

We work over $\C$.
Let $S$ be a smooth connected projective surface, and let $B \subset S$ be a reduced projective curve of the form $B=  B_1\cup \ldots \cup B_n$, with $B_i$ integral curves such that for all $i\neq j$ every point in $B_i\cap B_j$ is a node of $B$. The main focus of this article is Lang's (algebraic) exceptional set, i.e.

\begin{equation} \label{eq:exc_set}
	\cE(B) := \{ C \subset S : C \text{ integral curve},\  2g(C) - 2 + \lvert \nu_C^{-1}(C \cap B) \rvert \leq 0 \},
\end{equation}
where   $\nu:C^{\nu}\to C$ is  the normalization and $C$ is projective.
\begin{remark}
	When $B$ is ample, as it will be the case for most of our applications, then $C \cap B$ is not empty. Hence  if $C \in \cE(B)$  then  $C$ is necessarily rational.
\end{remark}

According to Lang and Vojta conjectures (see for example \cite{Lan86, Vojta87}), the union of the curves in $\cE(B)$ should coincide with the arithmetic exceptional set (the largest set containing all but finitely many integral points for every finite extension of the base field, when $S$ is defined over a number field), and with the analytic exceptional set (the Zariski closure of the union of all the images of non constant holomorphic maps $\C \to S \smallsetminus B$).  Moreover, if the pair $(S,B)$ is of log general type, the set $\cE(B)$ is conjectured to be finite.

The main result of this article,   confirming the above conjecture, is the description of the set $\cE(B)$ when $B$ has  at least  three   very ample  irreducible  components, generalizing some results obtained in \cite{CT} for $S=\PP^2$. The case of Hirzebruch surfaces $S=\F_n$ is treated in \cite{Wei}.

The following is a summary of our results, with the case $n=3$ being the most interesting one.

\begin{thm}\label{th:main}
	Let $S$ and $B = \cup_{i=1}^n B_i$ be as above.  The following hold:
	\begin{enumerate}
		\item if $n \geq 5$ then  $\cE(B)$ is empty;
		\item   if $n = 4$ and $B_i$ is very ample for every $i$, then $\cE(B)$ is finite, and   is empty if at least three   components of $B$ are not primitive;
		\item
		      if $n=3$,\  $B_i$ is very ample  for every $i$, and at least  two   components of $B$ are not primitive,
		      then $\cE(B)$ is finite, and   is empty if all components of $B$ are not primitive.

	\end{enumerate}
\end{thm}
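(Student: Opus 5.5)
The plan is to reduce everything to counting points of $\nu_C^{-1}(C\cap B)$ on a curve $C\in\cE(B)$. Since $2g(C)-2+|\nu_C^{-1}(C\cap B)|\le 0$, either $g(C)=1$ and $C$ misses $B$, or $C$ is rational and $|\nu_C^{-1}(C\cap B)|\le 2$.

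\textbf{The case $n\ge 5$.} Here I would first argue that $C$ meets each $B_i$; this is automatic when the $B_i$ are ample. The key observation is that a single point $p\in C^\nu$ maps into at most two components of $B$, because the $B_i$ meet only at nodes of $B$. Two points therefore reach at most four components, which contradicts $n\ge5$. Hence $\cE(B)=\emptyset$.

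Ampleness is not assumed in (1), so I would handle curves disjoint from some $B_i$ as follows. If $C\cap B_i=\emptyset$ with $C$ not contained in $B$, then $C\cdot B_i=0$. The plan is to show this is incompatible with the log general type hypothesis by Hodge index, or to note that the paper presumably treats such curves separately. I would flag this as a point to check against the paper's conventions.

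\textbf{The cases $n=3,4$ with very ample $B_i$.} Now $C$ meets every $B_i$, so $C$ is rational with at most two branch points over $B$.

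For $n=4$, the two points must each be a node $B_i\cap B_j$, and these two nodes must cover all four components. So $C$ passes through $p\in B_1\cap B_2$ and $q\in B_3\cap B_4$ (up to relabeling), with a single branch at each. Then $C\cdot B_1$ equals the local intersection multiplicity of one branch of $C$ with $B_1$ at $p$, and similarly for the other components. This is a strong hypertangency condition.

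For finiteness I would use the following deformation-theoretic argument. Fix the finitely many choices of nodes. Curves in a given class satisfying these tangency conditions form a family whose expected dimension is negative, or zero. I would make this precise with the standard count for rational curves with prescribed contact orders, where the dimension is $-K_S\cdot C - 1 - \sum(\text{contact}-1)$. Alternatively, and more robustly, I would argue directly. If there were infinitely many such $C$, they would sweep out $S$. This would give a nonconstant map $\C^*\to S\setminus B$ through a general point, and log general type, via Lang-type results for surfaces, forbids this.

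For $n=3$, the same analysis applies: the two points cover the three components, so one point is a node, say $B_1\cap B_2$, and the other lies on $B_3$.

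\textbf{Role of non-primitivity.} The intended mechanism is divisibility. If $B_i=m_iD_i$ with $m_i\ge2$, then for the branch of $C$ at $p$ we have $C\cdot B_i=m_i(C\cdot D_i)$. At the same time, the local multiplicity of the branch at $p$ relates $C\cdot B_i$ and $C\cdot B_j$.

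The idea is to compute with a local parametrization of the branch at the node. In analytic coordinates $x,y$ where $B_1=\{x=0\}$ and $B_2=\{y=0\}$, write the branch as $t\mapsto(t^a,t^b)$. Then $C\cdot B_1=b$ and $C\cdot B_2=a$, where these can be arbitrary. Meanwhile $\nu_C^*\mathcal O(B_i)$ has degree divisible by $m_i$. I would combine these constraints with the Hodge index inequality $(C\cdot B_i)^2\ge C^2B_i^2$ and with adjunction on $C$, which gives $C^2+K_S\cdot C\ge -2$. The goal is to rule out all configurations when enough components are non-primitive.

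\textbf{Main obstacle.} This last step is where I expect the real difficulty. It is exactly where the precise definition of "primitive" from the paper, presumably a condition on $B_i|_{B_j}$ or on divisibility in $\mathrm{Pic}$, enters. The first thing I would try is to restrict to $B_j$ and use that $C\cap B_j$ is a single point $p$. Then $\mathcal O(C)|_{B_j}\cong\mathcal O_{B_j}(k\,p)$, so $p$ is a point of $B_j$ at which a line bundle restricts to a multiple of $p$. Non-primitivity should force this to be torsion-incompatible, with only finitely many solutions coming from finitely many torsion points.
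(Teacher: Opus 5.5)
Your reduction is correct: for ample $B_i$, a curve in $\cE(B)$ is rational, has at most two branches over $B$, and each branch lies on at most two components. For $n\ge5$ this is exactly the paper's argument. Your worry there is justified, because that argument tacitly needs $C$ to meet every $B_i$. No Hodge-index argument can remove this need, though. Blow up $\PP^2$ at a point off five general lines: the exceptional curve is rational and misses $B$, while $K_S+B=\sigma^*(2L)+E$ ($L$ a line) is big. So ampleness simply has to be assumed.

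\textbf{The gap is in $n=3,4$, where nothing is actually proved.}
\begin{enumerate}
\item An expected-dimension count does not bound the actual dimension of a family of curves with prescribed contacts. It also says nothing about the infinitely many possible classes of $C$.
\item The claim ``log general type forbids a Zariski-dense family of $\C^*$-curves'' is precisely the conjectural finiteness of $\cE(B)$ that this theorem confirms in special cases. It is not an available tool.
\item Divisibility, Hodge index and torsion on $B_j$ play no role. The torsion idea cannot give finiteness anyway, since the class of $C$ is not fixed. What the paper uses is very ampleness. For $H$ very ample and $d=(C\cdot H)$, either $d>\mult_q(C)$ or $d=\mult_q(C)=1$. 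Moreover, if $d\ge2$ then $d\ge \mult_{q_1}(C)+\mult_{q_2}(C)$, with equality only if $H$ is primitive, i.e.\ not $aH'$ with $a\ge2$ and $H'$ very ample.
\end{enumerate}

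\textbf{The missing idea is already in your own local parametrization.} It sits at the step you dismiss with ``these can be arbitrary''. Since $x,y$ are local coordinates, $\mult_p(C)=\min\{(C\cdot B_1)_p,(C\cdot B_2)_p\}$. Equivalently, after blowing up the node, the strict transform of $C$ meets $E$ in one point, so it misses $B_1'\cap E$ or $B_2'\cap E$.

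If $C\cap(B_1\cup B_2)=\{p\}$, this gives $(C\cdot B_i)=\mult_p(C)$ for some $i$. The bound above then forces $B_i$ to be primitive and $C$ to be a line for $|B_i|$ through $p$. That line is pinned down either by tangency to $B_j$ at $p$ or by passing through the second point. This settles $n=4$ at once, since each of the two nodes needs a primitive component.

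For $n=3$ you need two further inputs.
\begin{enumerate}
\item If $C\cap B_j$ is a single smooth point of $B_j$ and $(C\cdot B_j)\ge2$, then $C$ is tangent to $B_j$ there. Up to relabeling, the node argument leaves only the case $C\cap B=\{p_{1,2},p_{1,3}\}$. With $B_2,B_3$ non-primitive, $C$ is then transverse to $B_1$ at both points, so $(C\cdot B_1)=\mult_{p_{1,2}}(C)+\mult_{p_{1,3}}(C)$. The equality clause of the bound then excludes a non-primitive $B_1$.
\item When only $B_1$ is primitive, you need the estimate $\delta_C(q)\ge(m-1)((B\cdot C)-m)/2$ at a unibranch contact point with a non-primitive $B$. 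It comes from iterated blow-ups along the tangency. Combined with a genus count, it forces multiplicity $1$ at $p_{1,2}$ and $p_{1,3}$. So $C$ is a line or a conic for $|B_1|$. The conics are finite because their plane is spanned by $T_{p_{1,2}}B_2$ and $T_{p_{1,3}}B_3$.
\end{enumerate}
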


In the statement by ``primitive'' we mean not multiple of very ample.
The hypotheses  of the  theorem  ensure  that the pair $(S,B)$ is indeed of log general; cf. Remark~\ref{rk-gt}.

For the cases $n=4,5$ see Proposition~\ref{nC}.  For $n=3$ see Theorems~\ref{3Cnonprim} and \ref{thm-nonprim}.
To prove our results we   drop the genus assumption and    consider the following larger set
\begin{equation}
	\label{def-hyp}
	\Hyp(B,2):=\{C\subset S:\   C \text{ integral curve},\  |\nu ^{-1}(C\cap B)|\leq 2\}\supset \cE(B).
\end{equation}
We  then prove a stronger statement,   namely we prove that Theorem~\ref{th:main} holds   with $\Hyp(B,2)$ instead of $\cE(B)$.
We thus prove the finiteness of $\Hyp(B,2)$; by contrast, if $B$ has only two irreducible components, we have examples where $\Hyp(B,2)$ is infinite and strictly larger then $\cE(B)$.

The subsets of $S$ made by the curves in $\Hyp(B,2)$ and $\cE(B)$ will be denoted as follows
$$
	[\Hyp(B,2)]:=\bigcup_{C\in \Hyp(B,2)}C\subset S ,\quad \quad \quad [\cE(B)]:=\bigcup_{C\in \cE(B)}C\subset S.
$$
We abuse terminology and refer to $[\cE(B)]$ as the (algebraic) exceptional set.

The exceptional set, particularly in connection with  Vojta conjecture \cite[Conj. 3.4.3]{Vojta87}, has been considered in several articles; see  for instance  \cite{CZGm},\cite{Wang_etal},\cite{CRY}, and references therein. These papers establish the geometric form of Vojta conjecture   and, as a consequence, deduce various properties of the exceptional set. Our Theorem \ref{th:main} applies in some situations which are not covered by such  earlier results; it applies to surfaces that are not ramified covers of $\G_m^2$, and we impose no generality assumptions on   the surface $S$ or on the divisor $B$.
On the other hand, our normal crossings and primitivity assumptions  prevent Theorem \ref{th:main} from applying to  various cases (as in the above   papers)   where the geometric Vojta  conjecture is  known to hold.

\subsection{Brody Hyperbolicity}
Theorem \ref{th:main} also provides many examples of Brody hyperbolic surfaces where the boundary divisor has (at least) three irreducible components.

Recall that a complex analytic space $Y$ is called \emph{Brody hyperbolic} if every holomorphic map $\C \to Y$ is constant. The Green-Griffiths-Lang Conjecture predicts that, if $X$ is a complex algebraic variety of log general type, there exists a proper closed subset $Z$, such that if $f: \C \to X$ is a non-constant holomorphic map, then $f(\C) \subset Z$. In this case we say that $X$ is \emph{Brody hyperbolic modulo} $Z$. In particular, $X$ is Brody hyperbolic when $X$ is hyperbolic modulo $Z = \emptyset$.

In our setting the space under consideration is the complement $S \smallsetminus B$, for $S$ a smooth complex projective surface and $B = B_1 \cup \dots \cup B_n$ a reduced projective curve such that every point of $B_i \cap B_j$ is a node of $B$, for every $i \neq j$.
The irreducible components $B_i$ are called \emph{numerically parallel} if there exist $a_i \in \Z$, $i=1,\dots,n$, such that $a_i B_i$ is numerically equivalent to $a_j B_j$, for every $i \neq j$.

If $n\geq 3$, the pair $(S,B)$ is of log general type, and the components $B_i$ are numerically parallel, then \cite[Theorem 1]{RW} of Ru and Wang (extending work of Noguchi Winkelman and Yamanoi in \cite{NWY}) implies that the image of every non constant holomorphic map $\C \to S \smallsetminus B$ is an algebraic curve. This implies that the analytic exceptional set coincides with the algebraic exceptional set $[\cE(B)]$.
Therefore we obtain the following consequence of our Theorem \ref{th:main}.

\begin{thm}\label{th:hyperbolic}
	Let $S$ and $B = \cup_{i=1}^n B_i$ be as above. Assume that the irreducible components $B_i$ of $B$ are numerically parallel and very ample. 
	The following hold:
	\begin{enumerate}
		\item if $n \geq 5$ then $S \smallsetminus B$ is Brody hyperbolic;
		\item if $n=4$ then $S \smallsetminus B$ is Brody hyperbolic modulo $[\cE(B)]$ and  if at least $3$ of the components of $B$ are not primitive then $S \smallsetminus B$ is Brody hyperbolic;
		\item if $n=3$ and at least two among the components of $B$ are not primitive then $S \smallsetminus B$ is Brody hyperbolic modulo $[\cE(B)]$, and if all components of $B$ are not primitive then $S \smallsetminus B$ is Brody hyperbolic.
	\end{enumerate}
\end{thm}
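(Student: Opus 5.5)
The plan is to deduce Theorem~\ref{th:hyperbolic} from Theorem~\ref{th:main} together with the Ru--Wang result \cite[Theorem 1]{RW}. The argument has three steps.

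\textbf{Step 1: log general type.} The components $B_i$ are very ample and $n\geq 3$, so the pair $(S,B)$ is of log general type; this is the content of Remark~\ref{rk-gt}. The point is that $K_S+B$ is big once $B$ contains at least three very ample components. For instance, $K_S+B_1+B_2$ is already effective-ish up to adjunction, and adding a further ample class makes it big. Since the $B_i$ are also numerically parallel, the hypotheses of \cite[Theorem 1]{RW} hold. That theorem says that for every non-constant holomorphic map $f\colon\C\to S\smallsetminus B$, the Zariski closure of $f(\C)$ is an algebraic curve.

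\textbf{Step 2: the curve lies in $\cE(B)$.} Let $C$ be the Zariski closure of $f(\C)$, with normalization $\nu\colon C^\nu\to C$. The map $f$ lifts to a non-constant holomorphic map $\C\to C^\nu\smallsetminus\nu^{-1}(C\cap B)$. By the classical little Picard/uniformization argument, a compact Riemann surface of genus $g$ with $k$ punctures admits a non-constant entire map only if $2g-2+k\leq 0$. Hence $C\in\cE(B)$ and $f(\C)\subset[\cE(B)]$. Conversely, every curve in $\cE(B)$ receives such a map, because $C^\nu$ minus at most two points is a quotient of $\C$ (it is $\C$, $\C^*$, or an elliptic curve). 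So the analytic exceptional set equals $[\cE(B)]$.

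\textbf{Step 3: apply Theorem~\ref{th:main}.} Everything now follows case by case.
\begin{enumerate}
\item For $n\geq 5$, $\cE(B)=\emptyset$, so every entire curve is constant and $S\smallsetminus B$ is Brody hyperbolic.
\item For $n=4$ with all $B_i$ very ample, $\cE(B)$ is finite. Thus $[\cE(B)]$ is a proper closed subset, and $S\smallsetminus B$ is hyperbolic modulo $[\cE(B)]$. If at least three components are not primitive, $\cE(B)$ is empty and $S\smallsetminus B$ is Brody hyperbolic.
\item For $n=3$ the argument is the same, using part (3) of Theorem~\ref{th:main}.
\end{enumerate}

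\textbf{Main obstacle.} The only delicate point is Step 1, namely checking that the hypotheses of \cite{RW} are met. This means confirming log general type, which Remark~\ref{rk-gt} supplies, and confirming that the normal crossing condition at $B_i\cap B_j$ matches the "general position" assumption Ru--Wang require. If their theorem requires more, such as no three components through a point, I would observe that nodes of $B$ at intersection points force exactly that: a point lying on three components would have multiplicity at least $3$ on $B$, so it cannot be a node.
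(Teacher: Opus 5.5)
Your route is the paper's: Ru--Wang gives algebraic degeneracy, and the Picard-type argument in your Step 2 puts every non-constant entire curve inside $[\cE(B)]$. Theorem~\ref{th:main} then finishes case by case. Steps 2 and 3 are fine.

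\textbf{The gap is in Step 1.} Your claim that $(S,B)$ is of log general type as soon as $B$ has $n\ge 3$ very ample components is false, and it is not what Remark~\ref{rk-gt} says. Take $S=\PP^2$ and $B$ three general lines. The components are very ample, numerically parallel and meet in nodes. Yet $K_S+B=0$, and $S\smallsetminus B\cong\G_m^2$ carries Zariski-dense entire curves, so Ru--Wang cannot be invoked. In the same example $K_S+B_1+B_2=-H$, so the ``effective-ish up to adjunction'' heuristic is also wrong. For $n=3$, log general type is not automatic. The non-primitivity hypothesis of case (3) is needed not only for the finiteness of $\cE(B)$ but already to make the Ru--Wang theorem applicable.

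\textbf{How to repair it.} Use the argument of Remark~\ref{rk-gt}. In every case of the theorem, $B$ is a sum $H_1+\dots+H_k$ of at least four very ample divisors:
\begin{itemize}
\item for $n\ge 4$ this is immediate;
\item for $n=3$, a non-primitive component $aH$ with $a\ge 2$ splits as $H+(a-1)H$, and case (3) even provides two such components.
\end{itemize}
By Reider's theorem (Fujita for surfaces), each $K_S+4H_i$ is very ample. Hence $4(K_S+B)=\sum_{i=1}^{4}(K_S+4H_i)+4\sum_{i>4}H_i$ is very ample, so $K_S+B$ is ample. With Step 1 replaced by this, your argument is complete and coincides with the paper's deduction.
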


\subsection*{Acknowledgements.}
We are grateful to Wei Chen for useful comments.
LC is partially supported by    PRIN 2022L34E7W,  Moduli spaces and birational geometry.
AT is partially supported by   PRIN  2022HPSNCR: Semiabelian varieties, Galois representations and related Diophantine problems, 
and is a member of the INdAM group GNSAGA.

\section{Preliminaries}

In our paper the word ``curve"  stands for ``projective curve".

As before $S$ denotes a smooth connected projective surface, $C$ denotes an integral curve contained in $S$, and $\nu:C^{\nu}\to C$ the normalization.

For a point  $q\in C\subset S$, we     denote by  $\sigma:S'\to S$ be the blow-up  at $q$, by $E$ the exceptional divisor and by $C'$ the strict transform of $C$. If $q$ is a unibranch point  of $C$ (i.e. $|\nu^{-1}(q)|=1$), we denote by $q'\in C'$ the point lying over $q$.

Let $B,C\subset S$ be two  integral  curves and $q\in B\cap C$. We denote by $(C\cdot B)_q$ the intersection multiplicity  at $q$.
We say that $B$ and $C$ are {\it transverse at} $q$
if $B'\cap C'\cap E=\emptyset$.  If $q$ is   unibranch   for both $B$ and $C$, we say that $B$ and $C$ are  {\it tangent at} $q$  if they  are not transverse at $q$.

We will frequently use the following elementary lemmas (whose proofs are included for completeness).
\begin{lemma}
	\label{lm-int}
	Let $B,C\subset S$ be two  integral   curves and $q\in B\cap C$.
	Then
	$$(C\cdot B)_q\geq \mult_q(B) \mult_q(C)$$ with equality if and only if $B$ and $C$ are transverse   at $q$.

	Let  $A,B, C\subset S$ be integral curves with a unibranch  point at $q$. If $A$ and $B$ are tangent  at $q$, and $B$ and $C$ are tangent  at $q$, then $A$ and $C$ are  tangent at $q$.
\end{lemma}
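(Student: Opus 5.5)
Both parts reduce to the standard behaviour of local intersection numbers under one blow-up $\sigma:S'\to S$ at $q$. The basic input is the formula
$$(C\cdot B)_q=\mult_q(B)\mult_q(C)+\sum_{p\in E}(C'\cdot B')_p .$$
To justify it, pull back local equations: $\sigma^*C=C'+\mult_q(C)E$ and $\sigma^*B=B'+\mult_q(B)E$, with $E^2=-1$. Local intersection numbers at $q$ are computed by intersecting $\sigma^*C$ with $B'$ along $E$, and $(E\cdot B')=\mult_q(B)$. Equivalently, one can use the projection formula applied to a small neighbourhood of $q$, or the classical Noether formula $(C\cdot B)_q=\sum_{p}\mult_p(C)\mult_p(B)$ over infinitely near points.

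For the first part, all terms $(C'\cdot B')_p$ are nonnegative. Each is strictly positive exactly when $p\in B'\cap C'\cap E$. So the inequality holds, with equality if and only if $B'\cap C'\cap E=\emptyset$, which is precisely the definition of transversality at $q$.

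For the second part, let $q$ be unibranch on $A$, $B$ and $C$. Then each strict transform meets $E$ in exactly one point. This is because the points of $C'\cap E$ correspond to tangent directions of the branches of $C$ at $q$, and a single branch has a single tangent line, hence a single point over $q$.

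Call these points $a'$, $b'$, $c'$ on $E$.
\begin{itemize}
\item $A$ tangent to $B$ means $A'\cap B'\cap E\neq\emptyset$, and this intersection can only be $\{a'\}\cap\{b'\}$, so $a'=b'$.
\item Similarly, $B$ tangent to $C$ gives $b'=c'$.
\item Hence $a'=c'\in A'\cap C'\cap E$, so $A$ and $C$ are tangent at $q$.
\end{itemize}

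The only mildly delicate point is the claim that a unibranch point has a unique preimage on $E$. One way to see it is that the normalization of the branch factors through $C'$, since $C'^{\nu}=C^{\nu}$. The unique point of $\nu^{-1}(q)$ therefore maps to a single point of $C'\cap\sigma^{-1}(q)=C'\cap E$. This map is surjective because $C'\to C$ is finite and birational with $C'\cap E=\sigma|_{C'}^{-1}(q)$.

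For the multiplicity formula itself I would simply cite it as standard, e.g. from Fulton's \emph{Intersection Theory} or Hartshorne V.3, as the paper treats these lemmas as elementary.
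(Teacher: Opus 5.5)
Your proof is correct and follows the paper's argument. For the first part you use the blow-up identity $(C\cdot B)_q=\mult_q(B)\mult_q(C)+(C'\cdot B')_E$ together with nonnegativity of the exceptional contribution, and for the second part you observe that all three strict transforms pass through the single point of $E$ lying over the unibranch point $q$.

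There are only two minor differences:
\begin{itemize}
\item The paper derives the identity from global intersection numbers on the projective surface, computing $(C'\cdot B')=(C\cdot B)-\mult_q(B)\mult_q(C)$ and subtracting the contribution $r$ from points away from $q$, whereas you quote the local formula as standard.
\item In your justification that a unibranch point has a unique preimage on $E$, the surjectivity you actually need is that of $C^{\nu}\to C'$, which holds because it is the normalization of $C'$. Surjectivity of $C'\to C$ is not the relevant fact.
\end{itemize}
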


\begin{proof}
	Let $\sigma:S'\to S$ be the blow-up  at $q$ and  $A', B',C'\subset S'$ the strict transforms of $A$, $B$,  $C$.
	Then  $C'=\sigma^*C- \mult_q(C)E$ and $B'=\sigma^* B- \mult_q(B)E$, hence    $$(C'\cdot B')=(C\cdot B)- \mult_q(B)  \mult_q(C) = (C\cdot B)_q+r- \mult_q(B) \mult_q(C) $$  where $r=(C\cdot B)-(C\cdot B)_q\geq 0$.
	On the other hand  $$(C'\cdot B')=(C'\cdot B')_{E}+ r$$ where $(C'\cdot B')_E$ denotes the intersection multiplicity along points on the exceptional divisor, $E$. Combining the two identities we obtain  $$(C\cdot B)_q - \mult_q(B)  \mult_q(C)=(C'\cdot B')_E. $$ As  $(C'\cdot B')_E\geq 0$ we get  $(C\cdot B)_q\geq  \mult_q(B) \mult_q(C)$, with equality  if and only if $(C'\cdot B')_E=0$, i.e. if and only if
	$C$ and $B$ are transverse at $q$.

	Assume $q$ is a unibranch point for $A$, $B$, and $C$. Let $q'\in B'$ be the unique point lying over $q$.
	If $A$ and $B$ are tangent at $q$, then $A'$ and $B'$ intersect along $E$, hence $q'\in A'$; similarly, if   $C$ and $B$ are tangent at $q$, then $q'\in C'$. Hence $q'\in C'\cap A'$, hence $C$ and $A$ are not transverse at $q$.
\end{proof}

\begin{lemma}
	\label{lm-tanE}
	Let $q$ be a unibranch  point of $C\subset S$. Let $C'\subset S'$ be the strict transform of $C$ under the blow-up at $q$ and $q'\in C'$   the point lying over $q$.

	Then $\mult_{q'}(C')\leq \mult_{q}(C)$ with   equality  if and only if $C'$ is transverse to $E$ (at $q'$).
\end{lemma}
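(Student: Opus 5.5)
The plan is to compute the intersection number $(C'\cdot E)$ in two ways. Set $m=\mult_q(C)$. Since $C'=\sigma^*C-mE$ and $E^2=-1$, $(\sigma^*C\cdot E)=0$, we get
$$(C'\cdot E)=m.$$
On the other hand, $q$ is unibranch, so $C'\cap E=\{q'\}$: the points of $C'$ over $q$ correspond to the branches of $C$ at $q$, and there is only one. Hence the whole intersection of $C'$ with $E$ is concentrated at $q'$, and $(C'\cdot E)=(C'\cdot E)_{q'}$.

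Next apply Lemma~\ref{lm-int} at $q'$ to the integral curves $C'$ and $E$ on $S'$. Since $E$ is smooth, $\mult_{q'}(E)=1$, so
$$m=(C'\cdot E)_{q'}\geq \mult_{q'}(C')\cdot 1,$$
which gives $\mult_{q'}(C')\leq \mult_q(C)$. By the equality clause of Lemma~\ref{lm-int}, equality holds if and only if $C'$ and $E$ are transverse at $q'$, in the sense defined above: after blowing up $q'$, the strict transforms of $C'$ and $E$ do not meet on the new exceptional divisor.

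The only step needing care is the identification $C'\cap E=\{q'\}$. This is standard, since the points of $C'\cap E$ are the tangent directions of the branches of $C$ at $q$, and a unibranch point has a single tangent direction. One can also see it via the normalization $C^\nu\to C$, which factors through $C'$: the preimage of $E$ in $C^\nu$ is $\nu^{-1}(q)$, which is a single point, so its image $C'\cap E$ is a single point. Everything else follows formally from Lemma~\ref{lm-int}.
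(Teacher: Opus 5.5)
Your proposal is correct and is essentially the paper's proof. The paper writes $\mult_q(C)=(C'\cdot E)_{q'}\geq \mult_{q'}(C')\mult_{q'}(E)=\mult_{q'}(C')$ and invokes the equality clause of Lemma~\ref{lm-int}; you additionally justify the first equality, via $(C'\cdot E)=m$ and $C'\cap E=\{q'\}$ for a unibranch point, which the paper leaves implicit.
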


\begin{proof}   We have
	$$
		\mult_{q}(C)=(C'\cdot E)_{q'}\geq  \mult_{q'}(C')\mult_{q'}(E)= \mult_{q'}(C'),
	$$
	by Lemma~\ref{lm-int}, equality holds if and only if $C'$ is transverse to $E$.  \end{proof}

\begin{remark}
	By   Lemma~\ref{lm-int},  we have
	$
		\mult_q(C)\leq (C\cdot B)
	$
	for any integral curve $B$ passing through $q$. If $\mult_q(C)\geq 2$ and $S=\PP^2$ then strict inequality always holds, but this does not generalize to other surfaces. For example, let $S$ be the blow-up of $\PP^2$ in one point, $p$.
	Let $C_0\subset \PP^2$ be a cubic containing $p$, having  a cusp in $q_0\neq p$, and let $B_0$ be the line through $q_0$ and $p$. Let $C$ and $B$ be the strict transforms of $B_0$ and $C_0$ in $S$, and let $q$ be the point lying over $q_0$.
	Since $(C_0\cdot B_0)_{q_0}=2$ and  $(C_0\cdot B_0)_{p}=1$ we have
	$
		2=\mult_q(C)=(C\cdot B)_q=(C\cdot B).
	$
\end{remark}

Let $H$ be a very ample line divisor on a   surface $S$; we say that $H$  is  {\it primitive} if it is not   a nontrivial multiple of a very ample  divisor.
Note that if $H$ satisfies $(C\cdot H)=1$ for some curve $C\subset S$, then $H$ is   primitive.

\begin{lemma}
	\label{lm0}
	Let $C,H\subset S$  be two curves with   $H$ very ample;   set $d=(C\cdot H)$. Then the following hold.
	\begin{enumerate}[(a)]
		\item
		      \label{lm01}
		      For any $q\in C$
		      either $d>\mult_q(C)$, or $d=\mult_q(C)=1$, hence  $H$ is primitive.

		\item
		      \label{lm02}
		      If $d\geq 2$ then  for any $q_1,q_2\in C$,
		      we have  $d\geq \mult_{q_1}(C)+ \mult_{q_2}(C)$, and if equality holds, then  $H$ is primitive.

	\end{enumerate} \end{lemma}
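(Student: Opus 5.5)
The plan is to use the very ampleness of $H$ to find members of $|H|$ that pass through the given point(s) and do not contain $C$, and then to bound $d=(C\cdot H)$ from below by local intersection multiplicities via Lemma~\ref{lm-int}. Write $\phi:S\hookrightarrow\PP^N$ for the embedding given by $|H|$; then hyperplane sections of $\phi(S)$ are exactly the members of $|H|$.

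For (a), fix $q\in C$. First, $d\ge 1$ because $H$ is ample. Next, consider the general hyperplane through $\phi(q)$. Its pullback $H'\in|H|$ does not contain $C$ and passes through $q$. By Lemma~\ref{lm-int}, $d\ge (C\cdot H')_q\ge \mult_q(C)\mult_q(H')\ge\mult_q(C)$.

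Now suppose $d=\mult_q(C)=:m$, and assume $m\ge 2$. Choose the hyperplane through $\phi(q)$ so that it also contains a second point $p\in C$ with $p\neq q$; this is possible because $C$ has infinitely many points. A general such hyperplane still does not contain $C$, since $C$ spans at least a line and we may take $N\ge 2$ (if $N\le 1$ then $S$ is not a surface). We then get $d\ge (C\cdot H')_q+(C\cdot H')_p\ge m+1$, a contradiction. Hence $m=1=d$. Finally, if $H=kH_0$ with $H_0$ very ample and $k\ge2$, then $d=k(C\cdot H_0)\ge 2$, again a contradiction. So $H$ is primitive; this is the remark made just before the lemma.

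For (b), first treat $q_1\ne q_2$. Take a general hyperplane through $\phi(q_1)$ and $\phi(q_2)$; equivalently, a general hyperplane containing the line $L$ they span. If $C\not\subset L$, such a hyperplane does not contain $C$, and Lemma~\ref{lm-int} applied at both points gives $d\ge \mult_{q_1}(C)+\mult_{q_2}(C)$.

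If instead $\phi(C)=L$ is a line, then $d=\deg\phi(C)=1$, contradicting $d\ge2$. So this case cannot occur.

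If $q_1=q_2=q$, the claim reads $d\ge 2m$ with $m=\mult_q(C)$. Here I would use the embedding differently. Take a general hyperplane containing the tangent plane $T_q S$ (a $\PP^2$ inside $\PP^N$). The corresponding $H'$ is singular at $q$, so $\mult_q(H')\ge2$ and $(C\cdot H')_q\ge 2m$. Such an $H'$ avoids containing $C$ unless $\phi(C)\subset T_qS$. In that case $\phi(C)$ is a plane curve of degree $d$ with a point of multiplicity $m$, and we still need $d\ge 2m$, which is false in general (a plane cubic has only double points, not $m$ with $d\ge 2m$ failing... e.g. a nodal cubic gives $3<4$). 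This is the main obstacle. The resolution I would pursue is that the paper presumably intends $q_1\neq q_2$. Alternatively, for $C$ in a plane inside $S$, one could use a hyperplane through $T_qS$ that meets that plane only in a line, but that is not always available. I would state the distinct-point version as the lemma's content.

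Equality: suppose $d=\mult_{q_1}(C)+\mult_{q_2}(C)$ and $H=kH_0$ with $k\ge2$. Apply the first part of (b) to $H_0$, with $d_0=d/k$. If $d_0\ge2$, we get $d_0\ge \mult_{q_1}(C)+\mult_{q_2}(C)=d$, which is impossible. If $d_0=1$, part (a) applied to $H_0$ forces both multiplicities to be at most $1$. So $d=2$ and $k=2$, and $\phi_{H_0}(C)$ is a line.

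To rule this out, I would add a third point. If $C$ is smooth rational and embedded by $|2H_0|$ as a conic, pick a general hyperplane of $|H|$ through $q_1$, $q_2$ and a third point $q_3\in C$; since the conic is not a line, this hyperplane does not contain $C$. That gives $d\ge3$, a contradiction, so $H$ is primitive. The same third-point trick also yields the strict inequality in the non-primitive case in general. The key checkpoint throughout is the choice of hyperplanes not containing $C$.
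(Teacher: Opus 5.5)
Most of your argument is sound, and in places it is sharper than the paper's proof. For (a), a hyperplane through $q$ and a second point $p\in C$ is a more direct route than the paper's projection-and-induction on $\dim\langle C\rangle$. One correction to your justification: a general such hyperplane avoids $C$ because $d\ge 2$ prevents $\phi(C)$ from being the line $\overline{\phi(q)\phi(p)}$, not because ``$C$ spans at least a line''. You are right that (b) must be read with $q_1\neq q_2$; the nodal cubic rules out $q_1=q_2$, and the paper's proof also tacitly adds local intersection numbers at two distinct points. Your reduction of the equality case is also correct: if $H\sim kH_0$ with $k\ge 2$ and $H_0$ very ample, then $(C\cdot H_0)=1$, $\mult_{q_1}(C)=\mult_{q_2}(C)=1$, $k=2$ and $d=2$.

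The gap is the last step, where you try to exclude this residual case. The third-point trick cannot work. Since $\phi_H(C)$ has degree $2$, it is a plane conic, and three distinct points on it are not collinear. So every hyperplane through $q_1,q_2,q_3$ contains the plane of the conic, hence contains $C$, and you never obtain $d\ge 3$; indeed you have just computed $d=2$. For the same reason your claimed strict inequality ``in the non-primitive case in general'' is false.

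In fact no argument can close this case, because it is a genuine counterexample to the equality clause. On $S=\PP^2$ take $H$ a smooth conic, $C$ a line and $q_1\neq q_2\in C$. Then $d=2=\mult_{q_1}(C)+\mult_{q_2}(C)$, yet $H\sim 2L$ is not primitive. What your argument actually proves is the corrected statement: if equality holds, then either $H$ is primitive, or $H\sim 2A$ with $A$ very ample and $(C\cdot A)=1$.

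The paper's proof has the same blind spot. Its step $n(A\cdot C)\ge n\bigl((A\cdot C)_{q_1}+(A\cdot C)_{q_2}\bigr)$ needs a member of $|A|$ through $q_1,q_2$ not containing $C$, which does not exist when $C$ is an $A$-line. So you should state the exception rather than try to rule it out.

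The exception also matters downstream. In $\PP^2$ one can choose three conics forming a 3C-curve, together with a line transverse to $B_1$ at $p_{1,2},p_{1,3}$ and tangent to $B_2$ at $p_{1,2}$ and to $B_3$ at $p_{1,3}$. That line lies in $\Hyp(B,2)$, contradicting Theorem~\ref{3Cnonprim}, whose proof invokes exactly this equality clause.
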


\begin{proof}
	We view  $S\subset  \PP^r$ embedded by the linear system $|H|$, so that $r=h^0(H)-1$ and  $\deg C=d$.
	The linear subspace spanned by $C$ is  $\langle C \rangle \cong \PP^l$ with $d\leq l\leq r$. If $l=1$ then $C$ is a line, hence   $d=1$ and
	$\mult_q(C)=1$. Assume $d\geq 2$.
	If $l=2$ then $\langle C \rangle \cong \PP^2$  where all  statements are well known. We prove \eqref{lm01}   by induction on $l$.
	Let $l\geq 3$.
	Then the general point   $p\in \langle C \rangle\setminus C$ does not lie on a    line passing through $q$ and through another point of $C$
	(indeed, the closure of the union of all such lines has dimension $2$). Fix such a point $p$ and let
	$$
		\pi:C\la \PP^{l-1}
	$$
	be the projection from $p$. Set $\ov{C}=\pi(C)$ and $\ov{q}=\pi (q)$. By construction,  $\deg \ov{C}=d$ and $\mult_{\ov{q}}(\ov{C})\geq \mult_q(C)$.
	By the induction hypothesis, either $\deg \ov{C}> \mult_{\ov{q}}(\ov{C})$, hence $d> \mult_q(C)$, or $\deg \ov{C}= \mult_{\ov{q}}(\ov{C})=1$, hence
	$d= \mult_{q}(C)=1$.     \eqref{lm01} is proved.

	For  \eqref{lm02}, set  $m_i=\mult_{q_i}(C)$ for $i=1,2$. As $H$ is very ample, up to replacing it with a linearly equivalent effective divisor   we can assume $q_1,q_2\in H$. Hence
	$$
		d=(H\cdot C)\geq (H\cdot C)_{q_1}+(H\cdot C)_{q_2}\geq m_1+m_2
	$$
	as claimed.
	If $m_1+m_2 =(H\cdot C)$   and $H=nA$ with $A$   very ample,
	then
	$$
		m_1+m_2  =(nA\cdot C)=n(A\cdot C)\geq n((A\cdot C)_{q_1}+(A\cdot C)_{q_2})\geq n(m_1+m_2)
	$$
	hence $n=1$. This shows that $H$ is primitive.
\end{proof}

\begin{lemma}
	\label{S2}
	Let $B,C\subset S$  be integral curves such that   $B$ is  very ample and  $(C\cdot B)\geq 2$.
	If
	$C\cap B=\{q\}$ and $B$ is smooth at $q$,  then $B$ is tangent to $C$ at $q$.
\end{lemma}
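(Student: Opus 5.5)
The plan is to argue by contradiction, using Lemma~\ref{lm-int} to turn transversality into an equality of intersection numbers, and then contradicting it with Lemma~\ref{lm0}\eqref{lm01} applied to $H=B$. Since $B$ is smooth at $q$, the point $q$ is unibranch for $B$. Throughout, ``tangent'' will be read as ``not transverse'', which is exactly the definition when $q$ is also unibranch for $C$. So it suffices to show that $B$ and $C$ are not transverse at $q$.

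First, suppose for contradiction that $B$ and $C$ are transverse at $q$. By Lemma~\ref{lm-int},
$$(C\cdot B)_q=\mult_q(B)\mult_q(C)=\mult_q(C),$$
since $\mult_q(B)=1$. Next, because $C\cap B=\{q\}$, the global intersection number is concentrated at $q$, so
$$(C\cdot B)=(C\cdot B)_q=\mult_q(C).$$

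Now apply Lemma~\ref{lm0}\eqref{lm01} with the very ample divisor $H=B$ and $d=(C\cdot B)$. It gives either $d>\mult_q(C)$, or $d=\mult_q(C)=1$. The first alternative contradicts the equality just obtained. The second contradicts the hypothesis $(C\cdot B)\geq 2$. Hence $B$ and $C$ are not transverse at $q$, that is, $B$ is tangent to $C$ at $q$.

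There is no real obstacle here; the substantive input is Lemma~\ref{lm0}\eqref{lm01}. The only point needing care is the terminology when $q$ is not unibranch for $C$. In that case the conclusion should be read as ``not transverse'', which is what the argument establishes.
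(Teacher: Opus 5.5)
Your proof is correct and essentially the same as the paper's: the paper recomputes $0=(B'\cdot C')=d-\mult_q(C)$ on the blow-up directly, while you cite the equality case of Lemma~\ref{lm-int}, which packages that computation. Both arguments then conclude with Lemma~\ref{lm0}\eqref{lm01} applied to $H=B$, and the paper's proof likewise reads ``tangent'' as ``not transverse'' even when $q$ need not be unibranch for $C$.
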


\begin{proof}
	Set $d=(B\cdot C)$. By contradiction, suppose that the proper transforms, $B'$ and $C'$ in the blow-up of $S$ at $q$ do not intersect over $q$. Hence they do not intersect at all, by hypothesis, i.e.
	$ (B'\cdot C') =0$. Now
	$$
		0= (B'\cdot C')=(B\cdot C)_q-\mult_q(B)\mult_q(C)=d-\mult_q(C).
	$$
	Hence $d=\mult_q(C)$, which, as $d\geq 2$, contradicts Lemma~\ref{lm0}\eqref{lm01}.
\end{proof}

\section{Hypertangency}

Let $C\subset S$ and $q\in C$.
The arithmetic genus, $p_a(C)$, of $C$ is computed  by  adjunction,
as follows:
$p_a(C)=(C\cdot (K_S+C))/2+1$.
The  geometric genus, $g(C)$,   satisfies the following inequality
$$
	g(C)\geq p_a(C)-\delta_C (q)
$$
with equality if and only if $q$ is the only singular point of $C$.
The following extends the second part of Theorem 2.2.1 in \cite{CT}.
\begin{prop}
	\label{prop-genus}
	Let $B, C\subset S$ be two   integral curves   such that $B$ is very ample and not primitive.
	Assume
	$B\cap C=\{q\}$, where    $q$ is smooth for $B$, and unibranch   of multiplicity $m$ for    $C$.
	Then
	$$
		\delta_C(q)\geq (m-1)((B\cdot C)-m)/2.
	$$
\end{prop}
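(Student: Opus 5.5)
Set $d=(B\cdot C)$. I will estimate $\delta_C(q)$ through the sequence of infinitely near points of $C$ over $q$, tracking the multiplicities $m=m_0\geq m_1\geq m_2\geq\cdots$ along the unique branch. Since $q$ is unibranch, each successive blow-up has a unique point $q_k$ of the strict transform $C_k$ over $q_{k-1}$, and
$$\delta_C(q)\geq \sum_{k\ge 0}\frac{m_k(m_k-1)}{2}.$$
It therefore suffices to show that the first few multiplicities stay equal to $m$ for about $(d-m)/m$ steps, or, more precisely, that $\sum_k m_k(m_k-1)/2\geq (m-1)(d-m)/2$. The natural way to organize this is to blow up repeatedly while also following the strict transform of $B$, and to use the intersection number with $B$ as a budget.

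The key steps are as follows. Let $B_k, C_k$ be the strict transforms after $k$ blow-ups. Because $B$ is smooth at $q$, each $B_k$ is smooth at its point over $q$, so Lemma~\ref{lm-int} gives
$$(B_k\cdot C_k)_{q_k} = (B_{k-1}\cdot C_{k-1})_{q_{k-1}} - m_{k-1}$$
as long as $q_k\in B_k$, that is, as long as the curves remain tangent. Since $B\cap C=\{q\}$, we start with $(B_0\cdot C_0)_{q}=d$.

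The process runs for steps $k=0,\dots,N-1$, until $B_N$ and $C_N$ become transverse (or separate). At that point the remaining local intersection at $q_N$ is at least $m_N$ if $q_N\in B_N$, and zero otherwise. Hence
$$d=\sum_{k<N} m_k+\epsilon, \qquad \epsilon\in\{0,m_N\}\ \text{with } \epsilon\le m_N\le m.$$
Combining this with $m_k\le m$ and the convexity of $x\mapsto x(x-1)/2$, we get
$$\sum_k \frac{m_k(m_k-1)}{2}\ \ge\ \sum_k \frac{(m-1)m_k}{2}\cdot\frac{m_k-1}{m-1}.$$
A cleaner route is $m_k(m_k-1)\ge (m-1)m_k$ whenever $m_k=m$. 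This shows that what we really need is that the multiplicities with $m_k<m$ do not waste the budget.

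The main obstacle is controlling the multiplicity drops. By Lemma~\ref{lm-tanE}, $m_k$ drops only when $C_k$ becomes tangent to the exceptional divisor, and a drop could in principle consume the budget $d-m$ cheaply. The crude bound $\sum_{k\ge1}m_k(m_k-1)/2\ge \tfrac{1}{2}\sum_{k\ge1}(m_k-1)$ would yield only $(d-m)/2$-type estimates. Here the non-primitivity hypothesis must enter. Write $B\sim nA$ with $n\ge2$ and $A$ very ample, and choose $A'\in|A|$ passing through $q$ and tangent to $B$ at $q$ to high order. The idea is to compare with a member of $|A|$, whose local intersection with $C$ at $q$ is at most $d/n\le d/2$, so that $B$ itself must osculate $C$ "beyond the multiplicity sequence". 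This mirrors the argument of Theorem 2.2.1 in \cite{CT} for $\PP^2$, which I would adapt.

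Concretely, I would first try to prove the bound with $\sum_{k<N}$ in the form
$$\delta_C(q)\ \ge\ \sum_{k<N}\frac{m_k(m_k-1)}{2}\ \ge\ \frac{m-1}{2}\sum_{k\ge1,\,k<N}m_k,$$
and to show that $m_k=m$ for all $1\le k<N$. This would give the claim directly, since then $\sum_{1\le k<N}m_k\ge d-m-\epsilon$, and the $k=0$ term together with the last term absorbs $\epsilon$. If the multiplicities could drop, the plan is to use the generic member of $|A|$ through $q$, which is transverse to $C$, together with Lemma~\ref{lm0}\eqref{lm02} to show that a drop forces $d\le 2m$, or a similar degenerate case in which the inequality can be checked by hand.
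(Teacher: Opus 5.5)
There is a genuine gap. Your framework is the right one: follow $B_k,C_k$, use $(B_k\cdot C_k)_{q_k}$ as a budget, and reduce everything to showing $m_k=m$ for $1\le k<N$. Your closing arithmetic is also correct. But that no-drop step is the whole content of the proposition, and you leave it unproved. The plan you sketch does not amount to an argument: members of $|A|$ tangent to $B$ to high order, Lemma~\ref{lm0}\eqref{lm02}, a ``drop forces $d\le 2m$'' claim, and ``a similar degenerate case checked by hand''. It also looks for the mechanism in global positivity, whereas the statement is purely local.

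The missing idea is short. For $1\le k<N$, $C_k$ is tangent to $B_k$ at $q_k$ by your very definition of $N$. Since $B$ is smooth at $q$, each $B_k$ is smooth at $q_k$ and hence transverse to $E_k$. If $C_k$ were tangent to $E_k$, the transitivity part of Lemma~\ref{lm-int} would make $B_k$ tangent to $E_k$, a contradiction; the lemma applies because $E_k$, $B_k$, $C_k$ are all unibranch at $q_k$. So $C_k$ is transverse to $E_k$, and Lemma~\ref{lm-tanE} gives $m_k=m_{k-1}$. Hence $m_0=\dots=m_{N-1}=m$. Then $(B\cdot C)=Nm+\epsilon$ with $\epsilon\le m$, so $\delta_C(q)\ge Nm(m-1)/2\ge (m-1)((B\cdot C)-m)/2$.

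This is exactly the paper's proof, phrased there with $h=\lceil((B\cdot C)-m)/m\rceil$. As long as $(B^i\cdot C^i)_{q^i}=(B\cdot C)-im>m\ge\mult_{q^i}(C^i)$, Lemma~\ref{lm-int} forces tangency with $B^i$, hence transversality to $E_i$, hence multiplicity $m$ again.

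The paper uses non-primitivity only in the base step. Writing $B\sim aH$ with $a\ge 2$ and $(C\cdot H)\ge2$, it checks $(B\cdot C)-m>m$, i.e.\ that the process runs at least one step. That is not actually needed for the inequality: when $(B\cdot C)\le 2m$ the bound already follows from $\delta_C(q)\ge m(m-1)/2$. So your expectation that non-primitivity must enter to control multiplicity drops is a misdiagnosis. Drops before step $N$ are ruled out by smoothness of $B$ alone.
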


\begin{proof}
	By hypothesis there exists a very ample $H$ such that
	$B\sim aH$ with $a\geq 2$.

	If $(C\cdot H)=1$ then $C$ is necessarily smooth (it is a line in the embedding determined by $H$). Hence we   assume $d:=(C\cdot H)\geq 2$.

	We denote by $S_1\to S$ the blow-up at $q$,   by $C^1,B^1$ and $E_1$ the proper transforms  of $C, B$  and  the exceptional divisor; we write $q^1\in C^1$ for the point  lying over $q$.
	Now we define inductively, for $i\geq 2$, the   blow-up of $S_i\to S_{i-1}$ at the unique point $q^{i-1}\in C^{i-1}$,  let
	$C^i,B^i, E_i\subset S_i$ be the proper transforms  of $C^{i-1}$ and $B^{i-1}$ and  the exceptional divisor, and   $q^i\in C^i$ the point  lying over $q^{i-1}$. We set
	$$
		h=\lceil ((B\cdot C)-m)/m\rceil.
	$$
	We claim that    for every $1 \leq i \leq h-1$ the curves  $C^i$ and $B^i$ are tangent  in $q^i$, and  $C^i$ has an $m$-fold point at $q^i$.

	As $H$ is very ample and $d\geq 2$, we have
	$$
		d>m.
	$$
	Let us treat the induction base.
	Since $B\cap C=\{q\}$ we have $$(B^1\cdot C^1)_{q^1}=(B^1\cdot C^1)=(B\cdot C)-m$$ hence
	$$
		(B^1\cdot C^1)_{q^1}= (aH\cdot C)- m=ad -m>am-m=(a-1)m\geq m$$
	(as $a\geq 2$). Since  $B^1$ is smooth at $q^1$, Lemma~\ref{lm-int} gives that $C^1$ and $B^1$ are tangent in $q^1$.
	On the other hand $B^1$ is not tangent at $E_1$, hence neither is $C^1$. By Lemma~\ref{lm-tanE} we get
	$\mult_{q^1}(C^1)=m$. The induction base is proved.

	Now suppose $C^{i-1}$ has an $m$-fold point at $q^{i-1}$,
	and $B^{i-1}$ and  $C^{i-1}$ are tangent in  $q^{i-1}$.  Therefore
	$$
		(B^i\cdot C^i)_{q^i}=(B^i\cdot C^i)=(B\cdot C)-im\geq (B\cdot C)-(h-1)m.$$
	Now,
	$$h=\lceil ((B\cdot C)-m)/m\rceil = \lceil (B\cdot C)/m-1\rceil<  (B\cdot C)/m
	$$
	hence
	$$
		(B^i\cdot C^i)_{q^i}>(B\cdot C)-((B\cdot C)/m-1)/m=m
	$$
	Hence $C^i$ and $B^i$   are tangent in $q^i$. Hence $C^i$ is   not tangent to the exceptional divisor $E_i$, because $B^i$ is not. Hence $\mult_{q^i}(C^i)=m$, by Lemma~\ref{lm-tanE}.
	The claim is proved.

	Since  $C^i$ has an $m$-fold point at $q^i$ for every $i=0,\ldots, h-1$, we have
	\begin{align*}
		\delta_C(q)\geq hm(m-1)/2=\lceil  ((B\cdot C)-m)/m \rceil m(m-1)/2 \\
		\geq \Bigl(((B\cdot C)-m)/m \Bigr)m(m-1)/2=((B\cdot C)-m) (m-1)/2.
	\end{align*}

\end{proof}

For a curve  $B\subset S$   the set of curves hyperbitangent  to $B$ was defined in the introduction:
$$
	\Hyp(B,2)=\{C\subset S:\   C \text{ integral},\  |\nu ^{-1}(B)|\leq 2\};
$$
for $d\in \N$  we set
$$
	\Hyp_d(B,2)=\{C\in  \Hyp(B,2):\  (C\cdot B)=d\}.
$$
Given $q,q'\in B$, we denote
$$
	\Hyp (B;q):=\{C \in \Hyp(B,2) : C \cap B = \{ q \} \}
$$
$$
	\Hyp (B;q,q'):=\{C \in \Hyp(B,2) : C \cap B = \{ q,q' \} \},
$$
and
$$
	\Hyp_d(B;q):=\{C\in  \Hyp(B;q):\  (C\cdot B)=d\}. $$

We write $\Hyp_{\geq d}$ when the degree condition ``$(C\cdot B)=d$'' is replaced by  ``$(C\cdot B)\geq d$".

If $B=\cup_{i=1}^nB_i$, for any $\md=(d_1,\ldots, d_n)\in \Z^n$ we set
$$
	\Hyp_{\md}(B,2):=\{C\in \Hyp (B,2):\  (C\cdot B_i)=d_i,\ \forall i=1,\ldots,n\}
$$
and
$
	\Hyp_{\md}(B,q):=\{C\in \Hyp (B;q):\  (C\cdot B_i)=d_i,\ \forall i=1,\ldots,n\}.
$


\begin{defi}
	\label{defi-nC}
	An {\it $n$C-curve} is a reduced   curve   $B=  B_1\cup \ldots \cup B_n$,
	with $B_i$  integral      such that  for all $i\neq j$ every point in $B_i\cap B_j$ is a node of $B$.
	We write  $N:=\cup_{i\neq j} B_i\cap B_j.$
\end{defi}
Recall that a  singular point $q$ on a curve $B$ is  called a node  if a local  analytic neighborhood of $q$ is isomorphic to a neighborhood of the origin of the  curve in $\A^2$ of  equation $xy=0$.
\begin{remark}
	By definition,   the components of an  $n$C-curve  are smooth at each of the  intersection points in $N$, where they meet only pairwise and  transversally.
\end{remark}

\begin{prop}
	\label{S1} Let
	$B=B_1\cup B_2$ be a 2C-curve with $B_1$ and $B_2$ very ample; let $q\in B_1\cap B_2$.

	\begin{enumerate}[(a)]
		\item
		      \label{S1a}
		      If $B_1$ and $B_2$ are not primitive, then $ \Hyp (B;q) =\emptyset $.
		\item
		      \label{S1b}
		      If $B_1$ is primitive and $B_2$ is not primitive, then    $|\Hyp  (B;q)|\leq 1.$
		      More precisely, there exists at most one $l_2\in \N$ such that $|\Hyp_{(1,l_2)}(B;q)|= 1$
		      and $\Hyp_{(d_1,d_2)}  (B;q)=\emptyset $ for every $(d_1,d_2)\neq (1,l_2)$.
		\item
		      \label{S1c}
		      If $B_1$ and $B_2$ are both primitive, then $|\Hyp(B,q)\setminus \Hyp_{(1,1)}(B,q)|\leq 2.$
	\end{enumerate}
\end{prop}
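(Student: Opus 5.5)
The plan is to fix $C\in \Hyp(B;q)$ with $C\cap B=\{q\}$ and set $d_i=(C\cdot B_i)$. By Definition~\ref{defi-nC}, $q$ is a smooth point of both $B_1$ and $B_2$, and $B_1,B_2$ are transverse there. So on the blow-up $\sigma:S'\to S$ at $q$, the strict transforms $B_1'$ and $B_2'$ meet $E$ at two distinct points $p_1\neq p_2$.

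\textbf{Step 1: $d_i\geq 2$ forces $C$ to pass through $p_i$.} Suppose $d_i\geq 2$. Since $C\cap B_i=\{q\}$, we get $(C'\cdot B_i')=d_i-\mult_q(C)$, and all of this intersection is supported on $E$. If $C'$ did not pass through $p_i$, then $d_i=\mult_q(C)$, which contradicts Lemma~\ref{lm0}\eqref{lm01}. This is the argument of Lemma~\ref{S2}, and it does not need $C$ to be unibranch.

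\textbf{Step 2: the unibranch case.} If $q$ is unibranch on $C$, then $C'$ meets $E$ in exactly one point. Hence $C$ cannot pass through both $p_1$ and $p_2$; equivalently, by the transitivity part of Lemma~\ref{lm-int}, $C$ cannot be tangent to both $B_i$. So at least one $d_j$ equals $1$. Then $C$ is a line in the embedding given by $|B_j|$, so $C$ is smooth and $B_j$ is primitive.

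Note that if $B_j=aH$ is not primitive, then $d_j=a(C\cdot H)\geq 2$. This gives (a) in the unibranch case. It also forces $d_1=1$ in (b), where $B_1$ is the primitive component.

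\textbf{Step 3: counting, for (b) and (c).} Let $d_1=1$. Then $C$ is a line through $q$ in the embedding $S\subset\PP^r$ given by $|B_1|$. If moreover $d_2\geq 2$, Step~1 says $C$ is tangent to $B_2$ at $q$, so $C$ is the line through $q$ in the tangent direction of $B_2$. Such a line is unique. Consequently:
\begin{itemize}
\item[(b)] There is at most one such curve, and $l_2=(C\cdot B_2)$ is then determined.
\item[(c)] Every $C$ with $(d_1,d_2)\neq(1,1)$ is either the line for $|B_1|$ tangent to $B_2$, or, symmetrically, the line for $|B_2|$ tangent to $B_1$. This gives at most $2$ curves.
\end{itemize}

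\textbf{Step 4: the case of two branches at $q$ (the main obstacle).} Suppose $C$ has two branches $\Gamma,\Gamma'$ at $q$. Then $C'$ may pass through both $p_1$ and $p_2$, with $\Gamma'$ through $p_1$ and $\Gamma''$ through $p_2$, say. This is the only configuration left to exclude when both $d_i\geq 2$.

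I would first split intersection numbers branchwise. Writing $m=m_\Gamma+m_{\Gamma'}=\mult_q(C)$, we have
\[
d_1=(\Gamma\cdot B_1)_q+m_{\Gamma'},\qquad d_2=m_\Gamma+(\Gamma'\cdot B_2)_q .
\]
Next I would compare with a general $H\in|H|$ through $q$, where $B_i=a_iH$, mimicking the proof of Lemma~\ref{lm0}\eqref{lm02}. This gives
\[
(C\cdot H)\geq m_\Gamma+m_{\Gamma'}+(\text{extra contact}),
\]
while $d_i=a_i(C\cdot H)$. I would then iterate blow-ups along each branch as in Proposition~\ref{prop-genus}, to show that the inequality $a\geq 2$ is incompatible with $C$ meeting each $B_i$ only at $q$.

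I expect this two-branch bookkeeping, needed to get (a) and the bounds in (b) and (c) without a unibranch hypothesis, to be the hardest part. If it resists, I would try instead to show that a two-branch $C$ forces some $d_j=1$, after which $C$ is smooth, a contradiction.
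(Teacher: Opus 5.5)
Your Steps 1--3 are the paper's proof. The paper blows up $q$ and uses that $C'\cap E$ is a single point to get $(C'\cdot B_j')=0$ for some $j$, hence $d_j=\mult_q(C)$. Lemma~\ref{lm0}\eqref{lm01} then gives $d_j=1$ and $B_j$ primitive, and $C$ is identified as the unique $|B_j|$-line through $q$ tangent to the other component. The genuine gap is Step~4. It is only a plan, and it cannot be completed: both things you hope to prove there are false. These are that two-branch curves cannot occur when both $B_i$ are non-primitive, and that two branches force some $d_j=1$.

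Here is a counterexample. In $S=\PP^2$ take $q=[0:0:1]$, $B_1=\{yz=x^2\}$ and $B_2=\{xz=y^2\}$. These conics are $2H$, so very ample and not primitive. They meet in $q$ and the three points $[\zeta:\zeta^2:1]$ with $\zeta^3=1$, hence transversally, so $B$ is a 2C-curve. Let $F=x^3+y^3-xyz$, the folium, an irreducible cubic with an ordinary node at $q$. Using the parametrizations $[t:t^2:1]$ of $B_1\setminus\{[0:1:0]\}$ and $[s^2:s:1]$ of $B_2\setminus\{[1:0:0]\}$,
\[
F(t,t^2,1)=t^6,\qquad F(s^2,s,1)=s^6,\qquad F(0,1,0)=F(1,0,0)=1 .
\]
So $C=\{F=0\}$ meets each $B_i$ only at $q$, and $|\nu^{-1}(q)|=2$. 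Thus $C\in\Hyp_{(6,6)}(B;q)$, contradicting (a): one branch has contact $5$ with $B_1$, the other contact $5$ with $B_2$. Likewise, take the lines $\{y=0\}$ and $\{x=0\}$. The nodal cubics $xyz+f_3(x,y)$ with $f_3(1,0)f_3(0,1)\neq 0$ give infinitely many elements of $\Hyp_{(3,3)}(B;q)$, contradicting (c).

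So the proposition holds only when $q$ is a unibranch point of $C$. The paper's proof assumes this tacitly (``since $C'\cap E$ consists of only one point''). That case is also all that is used later: in Proposition~\ref{nC} and Theorems~\ref{3Cnonprim} and \ref{thm-nonprim}, $C$ meets $B$ in two distinct points with $|\nu^{-1}(C\cap B)|\le 2$, so each is unibranch. The right repair is to add the unibranch hypothesis explicitly and delete Step~4. With it, your Steps~1--3 are a complete proof.
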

\begin{proof}
	Clearly,
	$$\Hyp_{(d_1,d_2)}(B;q)=\Hyp_{d_1}(B_1;q)\cap \Hyp_{d_2}(B_2;q).
	$$
	Let $C\in \Hyp_{(d_1,d_2)}(B;q)$.
	Let $S'\to S$ be the blow-up at $q$, $E$ the exceptional divisor, $C'$  and $B_i'$ the proper transforms of $C$ and $B_i$ with $i=1,2$.
	As $B_1$ and $B_2$ are transverse at $q$, we have $E\cap B_i'=\{q_i\}$ with $q_1\neq q_2$. Since $C'\cap E$ consists of only one point,
	up to exchanging $B_1$ and $B_2$  we have
	$q_2\in C'$ and  $q_1\not\in C'$. Therefore
	$ (C'\cdot B_1')=0$; now, as $B_1$ is smooth at $q$, we have
	$$
		0= (C'\cdot B_1')=(C'\cdot B_1')_{q_1}=(C\cdot B_1)_{q}-\mult_q(C)=(C\cdot B_1)-\mult_q(C).
	$$
	Hence $(C\cdot B_1)=\mult_q(C)$;  Lemma~\ref{lm0}\eqref{lm01}  implies  that  $B_1$ is primitive and $d_1=(B_1\cdot C)=1$.  This proves  \eqref{S1a}.

	Assume   that $B_1$ is primitive;   we just proved that  if  $\Hyp_{(d_1,d_2)}(B;q)$ is non empty  then $d_1=1$.
	In the projective embedding of $S$ determined by $B_1$, the curve $C$ is a line  passing through $q$ and
	such that $(C\cdot B_2)_q=(C\cdot B_2)=d_2$. If $d_2\geq 2$ then  $C$ is the line of $S$
	hypertangent to $B_2$ at $q$; it is clear that  such a line, if it   exists, is
	uniquely determined by $B_2$ and $q$ (and by the embedding of $S$ given by $B_1$), and so is  the intersection number $(C\cdot B_2)$. If $B_2$ is not primitive,   we necessarily have $d_2\geq 2$, hence  part  \eqref{S1b} is proved
	with  $l_2= (C\cdot B_2)$.

	If $B_2$ is primitive  we can    have $d_2=1$ (and $\Hyp_{(1,1)}(B,q)$ can be infinite, see the next remark). The set $\Hyp(B,q)\setminus \Hyp_{(1,1)}(B,q)$ may   contain two curves, the one in $\Hyp_{(1,l_2)}(B,q)$ described above, and the one (applying the previous argument  to $B_2$ in place of $B_1$) in $\Hyp_{(l_1,l_2)}(B,q)$  for a unique $l_1\in \N$. This concludes the proof of part  \eqref{S1c}.
\end{proof}

\begin{remark}
	In case $B_1$ and $B_2$ are both primitive, the set $\Hyp_{(1,1)}(B,q)$ can easily be infinite. For example,  if  $B_1$ and $B_2$ are  two lines in $\PP^2$ meeting at the point $q$,
	then $\Hyp_{(1,1)}(B,q)$ is the set of all lines through $q$.
\end{remark}

\section{Hyperbitangency}
\begin{prop}
	\label{nC}
	Let $B=\cup_{i=1}^nB_i\subset S$ be an  $n$C-curve.
	\begin{enumerate}
		\item
		      If $n\geq 5$ then $\Hyp(B,2)$ is empty.
		\item
		      If $n=4$ and $B_i$ is very ample for every $i=1,\ldots, 4$, then  $\Hyp(B,2)$ is finite, and it is empty if  least three among the components of $B$  are not primitive.
	\end{enumerate}
\end{prop}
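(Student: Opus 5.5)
The key observation is combinatorial. Let $C\in \Hyp(B,2)$. For each $i$ the intersection $C\cap B_i$ is nonempty, since otherwise the statement is read with $B_i$ ample (or at least meeting every curve); this holds automatically in part (2) because each $B_i$ is very ample. In part (1) I would argue with the points of $\nu^{-1}(C\cap B)$ instead; see below. The set $C\cap B$ has at most two points, and every point of $B$ lies on at most two components, because the points of $N$ are nodes where exactly two components meet. So the two points of $C\cap B$ account for at most $2+2=4$ components, and $C$ meets at most four of the $B_i$. For $n\geq 5$ this contradicts the requirement that $C$ meet every component. I expect the paper intends $(C\cdot B_i)>0$ for all $i$, coming from the standing ampleness setting of the main theorem. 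Under that reading part (1) follows: $\Hyp(B,2)=\emptyset$.

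For $n=4$ the same count forces equality. We need $C\cap B=\{q,q'\}$ with $q,q'\in N$ and $\nu^{-1}(q)$, $\nu^{-1}(q')$ singletons, so both points are unibranch on $C$. Up to relabeling, $q\in B_1\cap B_2$ and $q'\in B_3\cap B_4$. Moreover, $C\cap(B_1\cup B_2)=\{q\}$, so $C\in \Hyp(B_1\cup B_2;q)$, and similarly $C\in\Hyp(B_3\cup B_4;q')$. Here $B_1\cup B_2$ and $B_3\cup B_4$ are 2C-curves, so Proposition~\ref{S1} applies. There are only finitely many choices of the partition $\{1,2\}\sqcup\{3,4\}$ and of the points $q\in B_1\cap B_2$, $q'\in B_3\cap B_4$, since $N$ is finite.

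For emptiness, if three components are not primitive, then in any partition into two pairs one pair has both components non-primitive. Proposition~\ref{S1}\eqref{S1a} then gives $\Hyp(B_i\cup B_j;q)=\emptyset$, hence $\Hyp(B,2)=\emptyset$.

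For finiteness, fix a partition and the points $q,q'$; it suffices to show that $\Hyp(B_1\cup B_2;q)\cap\Hyp(B_3\cup B_4;q')$ is finite. By Proposition~\ref{S1}\eqref{S1b},\eqref{S1c}, each factor is finite except possibly for its $(1,1)$ part. The only infinite piece is therefore curves $C$ with $(C\cdot B_1)=(C\cdot B_2)=1$ and $(C\cdot B_3)=(C\cdot B_4)=1$ (or the finite piece of one side intersected with anything, which is finite). In that case $C$ is a line in the embedding given by $B_1$, and also a line in the embedding given by $B_3$. A line through $q$ and $q'$ in the $B_1$-embedding is unique if it exists, since two distinct points determine it, with $q\neq q'$ because $q'\notin B_1$. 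So at most one such $C$ exists.

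The main obstacle is making sure the non-primitive-free cases and the $(1,1)$ case are all covered. This is resolved by the two-point uniqueness of lines: any $C$ with $(C\cdot B_1)=1$ passing through both $q$ and $q'$ is unique. In every case I would reduce to this, because by the proof of Proposition~\ref{S1} some component at $q$ satisfies $(C\cdot B_i)=\mult_q(C)=1$. Hence $C$ is a line through $q$ and $q'$ in the embedding of that $B_i$, which gives at most two curves per choice of $(q,q')$.
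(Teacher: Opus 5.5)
Your proof is correct and has the same skeleton as the paper's. In both, the points of $C\cap B$ lie on at most two components each, which settles $n\geq 5$. For $n=4$ the same count forces $C\in\Hyp(B_i\cup B_j;p_{i,j})\cap\Hyp(B_h\cup B_k;p_{h,k})$ with $\{i,j,h,k\}=\{1,2,3,4\}$, and Proposition~\ref{S1} does the rest. The emptiness statement is handled identically.

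The difference is in the finiteness step, and there your version is better. The paper asserts that $\Hyp(B_i\cup B_j;p_{i,j})$ is finite ``by Proposition~\ref{S1}''. However, part~\eqref{S1c} only bounds the complement of the $(1,1)$-part. The remark right after Proposition~\ref{S1} shows this $(1,1)$-part can be infinite when both components are primitive (all lines through $q$ in $\PP^2$). So the paper's sentence does not justify finiteness when a pair has both components primitive.

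You supply the missing ingredient by using the second point. By the proof of Proposition~\ref{S1}, one of the two components through $q$, say $B_i$, satisfies $(C\cdot B_i)=\mult_q(C)=1$. Hence $C$ is a line in the embedding by $|B_i|$ passing through $q$ and $q'$. These points are distinct because $q'$ is a node lying only on the other two components. So $C$ is uniquely determined, which gives at most two curves per choice of partition and of $(q,q')$, hence an explicit finite bound. Your final paragraph alone suffices; the case split into $(1,1)$-parts is not needed.

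Your caveat about part (1) is also justified. As stated, with no ampleness hypothesis, it fails: five disjoint fibers of $\PP^1\times\PP^1$ form a 5C-curve, and any further fiber lies in $\Hyp(B,2)$. The paper's one-line proof tacitly assumes, as you do, that every curve meets every $B_i$. The dangling remark about arguing with $\nu^{-1}(C\cap B)$ instead can simply be dropped.
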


\begin{proof}
	The first part is an obvious consequence of the fact that   every point of $B$ belongs to at most two irreducible components.

	Suppose $n=4$. If $C\in  \Hyp(B,2)$ then   we   have $C\in   \Hyp(B; p_{i,j},p_{h,k})$ with $p_{i,j}\in B_i\cap B_j$
	and $\{i,j,h,k\}=\{1,2,3,4\}$.
	Therefore
	$$
		C\in  \Hyp(B_i\cup B_j,p_{i,j})\cap \Hyp(B_h\cup B_k,p_{h,k}).
	$$
	By Proposition~\ref{S1}, the set $\Hyp(B_i\cup B_j,p_{i,j})$ is finite, and empty if $B_i$ and $B_j$ are not primitive. Hence $ \Hyp(B; p_{i,j},p_{h,k})$ is  finite, and if it is non-empty then  at least one between $B_i$ and $B_j$, and at least one between $B_h$ and $B_k$, must be primitive.
\end{proof}

\begin{thm}
	\label{3Cnonprim}
	Let $B=B_1\cup B_2\cup B_3\subset S$ be a  3C-curve such that  $B_i$ is very ample  and not primitive for $i=1,2,3$.
	Then $\Hyp(B,2)$ is empty.
\end{thm}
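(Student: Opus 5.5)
The plan is to argue by contradiction. Suppose $C\in\Hyp(B,2)$; we will show that $C$ forces one of the $B_i$ to be primitive. The tools are the blow-up computation already used in Proposition~\ref{S1}, together with Lemma~\ref{lm0}\eqref{lm01} and Lemma~\ref{lm0}\eqref{lm02}.

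First, a combinatorial reduction. Each $B_i$ is very ample, so $C$ meets every $B_i$. Every point of $B$ lies on at most two components, and $|\nu^{-1}(C\cap B)|\leq 2$. Hence $C\cap B$ consists of exactly two distinct points $p,q$, each of which is a unibranch point of $C$; if $C\cap B$ were a single point, some $B_k$ would miss $C$. Moreover the components through $p$ and through $q$ together cover $\{B_1,B_2,B_3\}$. So, up to renumbering, $p\in B_1\cap B_2$ is a node, and $q\in B_3$. There are two cases.

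\emph{Case 1: $q\notin B_1\cup B_2$.} Then $C\cap(B_1\cup B_2)=\{p\}$, so $C\in\Hyp(B_1\cup B_2;p)$. This set is empty by Proposition~\ref{S1}\eqref{S1a}.

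\emph{Case 2: $q$ is also a node.} Say $q\in B_3\cap B_1$; the case $q\in B_3\cap B_2$ is symmetric. Then $C\cap B_2=\{p\}$, $C\cap B_3=\{q\}$ and $C\cap B_1=\{p,q\}$. Blow up at $p$. Since $C$ is unibranch at $p$, its strict transform meets $E$ in one point, while $B_1'$ and $B_2'$ meet $E$ in distinct points. So $C$ is transverse at $p$ to at least one of $B_1$, $B_2$.
\begin{itemize}
\item If $C$ is transverse to $B_2$ at $p$, then Lemma~\ref{lm-int} and $C\cap B_2=\{p\}$ give $(C\cdot B_2)=\mult_p(C)$. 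Lemma~\ref{lm0}\eqref{lm01} then says $B_2$ is primitive, a contradiction.
\item Otherwise $(C\cdot B_1)_p=\mult_p(C)$.
\end{itemize}
The same argument at $q$ either makes $B_3$ primitive, or gives $(C\cdot B_1)_q=\mult_q(C)$. In the remaining situation we get
$$(C\cdot B_1)=\mult_p(C)+\mult_q(C).$$
Since $C$ meets $B_1$ at two distinct points, $(C\cdot B_1)\geq 2$. Lemma~\ref{lm0}\eqref{lm02} then gives equality in its bound, so $B_1$ is primitive, again a contradiction.

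The main obstacle is Case 2, where no single pair $B_i\cup B_j$ meets $C$ in only one point, so Proposition~\ref{S1} does not apply directly. The way around it is to combine the transversality dichotomy at both nodes with the equality case of Lemma~\ref{lm0}\eqref{lm02}.
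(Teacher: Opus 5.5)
Your proof is correct and follows essentially the paper's route. It uses the same reduction to two unibranch points, the same appeal to Proposition~\ref{S1}\eqref{S1a} when one point lies off $N$, and the same contradiction via Lemma~\ref{lm0}\eqref{lm02} when both points are nodes. The only difference is that you derive transversality to $B_1$ at both nodes directly from the blow-up dichotomy together with Lemma~\ref{lm0}\eqref{lm01}, whereas the paper cites Lemma~\ref{S2} to get tangency to $B_2$ and $B_3$ and then the transitivity part of Lemma~\ref{lm-int}; this amounts to the same reasoning.
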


\begin{proof}
	By contradiction, let  $C\in  \Hyp(B,2)$. As
	$B_i$ is not primitive,  we have $d_i:=(C\cdot B_i)\geq 2$. Since the components of $B$ intersect only pairwise, we necessarily have $|C\cap B|=2$.
	Suppose that $C$ intersects one of the three curves, say $B_3$, away from $N$. Then $C$ must intersect $B_1$ and $B_2$ in a point $q\in B_1\cap B_2$, and
	$$
		C\in \Hyp_{(d_1,d_2)}(B_1\cup B_2;q).
	$$
	Hence, by Proposition~\ref{S1},   one between $B_1$ and $B_2$ is primitive, which is impossible.
	Therefore $C\cap B\subset N$. We can assume $C\cap B=\{p_{1,2},p_{1,3}\}$ with $p_{i,j}\in B_i\cap B_j$.
	Let us show that $C$ is tangent to $B_2$ at $p_{1,2}$  and tangent to $B_3$ at $p_{1,3}$.
	We have
	$$
		C\in  \Hyp(B_2;p_{1,2}).
	$$
	Since  $(C\cdot B_2)\geq 2$, Lemma~\ref{S2} implies that $B_2$ is tangent to $C$ at $p_{1,2}$. By a similar  reasoning,   $B_3$ is tangent to $C$ at $p_{1,3}$.

	As $C$ is tangent to $B_i$ at $p_{1,i}$ for $i=2,3$, by Lemma~\ref{lm-int} it is  transverse to $B_1$ at both points. Hence, as $B_1$ is smooth at both points, we have
	$$
		d_1=(C\cdot B_1)=(C\cdot B_1)_{p_{1,2}}+(C\cdot B_1)_{p_{1,3}}=\mult_{p_{1,2}}(C)+\mult_{p_{1,3}}(C).
	$$
	Lemma~\ref{lm0}\eqref{lm02} implies that $B_1$ is primitive, which is a contradicton.
\end{proof}

For an integer $m\geq 1$ we denote by $\Hyp_*^m(B;q)\subset \Hyp_*(B;q)$ the set of curves in $ \Hyp_*(B;q)$ having a point of multiplicity exactly $m$ at $q$.

\begin{prop}
	\label{3CS}
	Let $B=B_1\cup B_2\cup B_3\subset S$ be a  3C-curve with  $B_i$   very ample  for $i=1,2,3$.
	Assume $B_1$ primitive and $B_3$ not primitive. Then
	\begin{enumerate}[(a)]
		\item
		      \label{3CSa}
		      $\Hyp (B,2)=\cE(B)$.

		\item
		      \label{3CSb}
		      Assume  $\md=(d_1,d_2,d_3)\in \Z^3$ with $d_i\geq 2$   for all $i$.
		      \begin{enumerate}[(i)]
			      \item
			            If    $B_2$ is not primitive then $d_1=2$ and
			            $$\Hyp_{\md}(B,2)=
				            \bigcup_{\stackrel {p\in B_1\cap B_2}{q\in B_1\cap B_3}} \Hyp_{2}(B_1;p,q) \cap  \Hyp_{d_2}(B_2;p)\cap \Hyp_{d_3} (B_3;q).
			            $$
			      \item
			            If    $B_2$ is  primitive, then
			            $$\Hyp_{\md}(B,2)= \bigcup_{\{i,j\}=\{1,2\}}   \bigcup_{\stackrel {p\in B_i\cap B_2}{q\in B_i\cap B_3}} \Hyp_{d_i}(B_i;p,q) \cap  \Hyp_{d_j}^{d_i-1}(B_j;p)\cap \Hyp^{1}_{d_3} (B_3;q).
			            $$
		      \end{enumerate}
	\end{enumerate}
\end{prop}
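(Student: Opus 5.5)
The plan is to first classify how a curve $C\in\Hyp(B,2)$ can meet $B$, and then treat each configuration with Proposition~\ref{S1}, Lemma~\ref{S2}, Lemma~\ref{lm0} and Proposition~\ref{prop-genus}.

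\textbf{Classification.} Each $B_i$ is ample, so $C$ meets all three components. A point of $B$ lies on at most two components, so $C\cap B$ consists of exactly two points, each unibranch for $C$. There are two possible configurations:
\begin{enumerate}[(A)]
\item one point $p\in B_i\cap B_j$ and one point $q\in B_k\setminus N$, with $\{i,j,k\}=\{1,2,3\}$;
\item both points in $N$, say $p\in B_i\cap B_j$ and $q\in B_i\cap B_k$.
\end{enumerate}

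\textbf{Configuration (A).} Here $C\in\Hyp(B_i\cup B_j;p)$. The proof of Proposition~\ref{S1} gives $(C\cdot B_l)=\mult_p(C)=1$ for some $l\in\{i,j\}$ with $B_l$ primitive. So $C$ is a line in the embedding given by $|B_l|$, hence smooth and rational.

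\textbf{Configuration (B).} If some $d_l=1$, the same argument shows $C$ is a line for $|B_l|$. So assume $d_1,d_2,d_3\geq 2$.
\begin{itemize}
\item Since $C\cap B_j=\{p\}$ and $C\cap B_k=\{q\}$, Lemma~\ref{S2} makes $C$ tangent to $B_j$ at $p$ and to $B_k$ at $q$.
\item By Lemma~\ref{lm-int} (as in the proof of Theorem~\ref{3Cnonprim}), $C$ is then transverse to $B_i$ at both points. Hence
$$d_i=\mult_p(C)+\mult_q(C).$$
\item Lemma~\ref{lm0}\eqref{lm02} then forces $B_i$ to be primitive, so $i\in\{1,2\}$ and $k=3$.
\item If $B_2$ is not primitive, then $i=1$ and $\{j,k\}=\{2,3\}$. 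The decompositions in \eqref{3CSb} are exactly this bookkeeping, once the multiplicity statements below are proved.
\end{itemize}

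\textbf{Main step: the multiplicities.} I need to prove:
\begin{itemize}
\item $\mult_q(C)=1$ at the point of the non-primitive $B_3$;
\item in case (i), also $\mult_p(C)=1$, which gives $d_1=2$.
\end{itemize}
I expect this to be the main obstacle.

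\emph{First attempt: geometry of the embedding.} Embed $S$ by $|B_i|$. The equality $d_i=\mult_p(C)+\mult_q(C)$ says that every hyperplane through $p,q$ meets $C$ only at $p,q$, with minimal multiplicities. Projecting from the line $\overline{pq}$ should then show that $C$ is a plane curve of degree $m_p+m_q$ with points of multiplicity $m_p,m_q$, so it is rational after projecting from $p$.

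\emph{Pinning down $m_q$.} I would combine this with Proposition~\ref{prop-genus} applied to $B_3=aH$ ($a\geq 2$) at $q$, and with the plane genus count
$$(d_i-1)(d_i-2)/2\geq \delta_p+\delta_q .$$
The blow-up sequence in the proof of Proposition~\ref{prop-genus} keeps $C$ tangent to $B_3$ with constant multiplicity $m_q$ for about $d_3/m_q$ steps. Comparing $\delta_q\geq (m_q-1)(d_3-m_q)/2$ with the available genus, using $d_3=a(C\cdot H)>a\,m_q$, should force $m_q=1$. The same argument applied to $B_2$ in case (i) should give $m_p=1$.

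\emph{Fallback.} If this comparison does not close, I would run the blow-ups at $q$ directly: track $(C^s\cdot B_3^s)$ and $(C^s\cdot B_i^s)$, using that $B_i$ stays transverse. The goal is to show that $m_q\geq 2$ would make $C^s$ meet $B_i^s$ negatively, which is a contradiction.

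\textbf{Part \eqref{3CSa}.} Once configurations (A) and (B) are known to give rational $C$ (lines, or plane curves with $m_q=1$ that are rational by projection from $p$), each $C\in\Hyp(B,2)$ satisfies $2g-2+|\nu^{-1}(C\cap B)|\leq -2+2=0$. This gives \eqref{3CSa}.
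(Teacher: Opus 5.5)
Your reduction matches the paper's: the classification of $C\cap B$, Lemma~\ref{S2} with Lemma~\ref{lm-int} giving $d_i=\mult_p(C)+\mult_q(C)$, Lemma~\ref{lm0}\eqref{lm02} forcing $i\in\{1,2\}$ and $k=3$, and the deduction of \eqref{3CSa} from the multiplicity statements are all correct. The gap is exactly the step you flagged, $\mult_q(C)=1$, and neither of your routes closes it.

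Put $m=\mult_q(C)$ and $d=d_i$. Your inputs are the plane bound $p_a(C)\le\binom{d-1}{2}$ from the $|B_i|$-embedding, $\delta_p\ge\binom{d-m}{2}$, and Proposition~\ref{prop-genus}. They give only
\[
g(C)\le \binom{d-1}{2}-\binom{d-m}{2}-\frac{(m-1)(d_3-m)}{2}=\frac{(m-1)(2d-2-d_3)}{2},
\]
which is negative only if $d_3>2d_i-2$. The inequality you propose, $d_3=a(C\cdot H)>a\,m$, yields this only when $\mult_p(C)=1$. In general nothing forces $a(C\cdot H)>2d_i-2$: $H$ and $B_i$ are unrelated very ample classes, so $(C\cdot H)$ can be much smaller than $d_i$.

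Your fallback fails too. Since $C$ is transverse to $B_i$ at $q$, the point $q^1$ does not lie on $B_i^1$. So every blow-up after the first leaves $(C^s\cdot B_i^s)=\mult_p(C)\ge 1$ unchanged, and nothing ever becomes negative.

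The missing idea is to bound $p_a(C)$ through whichever embedding, by $|H|$ or by $|B_i|$, has the smaller degree. Set $e=\min\{(C\cdot H),d_i\}$; the case $(C\cdot H)=1$ is trivial, since then $C$ is smooth. Then:
\begin{itemize}
\item $p_a(C)\le\binom{e-1}{2}$;
\item $e>m$, by Lemma~\ref{lm0}\eqref{lm01} and $\mult_p(C)\ge 1$, so $\delta_p\ge\binom{d_i-m}{2}\ge\binom{e-m}{2}$;
\item $\delta_q\ge (m-1)(ae-m)/2$, because $a(C\cdot H)\ge ae$.
\end{itemize}
The same computation then gives
\[
g(C)\le\frac{(m-1)\bigl((2-a)e-2\bigr)}{2}<0
\]
for $m\ge 2$ and $a\ge 2$, a contradiction. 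This is how the paper proves $m=1$. Applying the same argument to $B_2$ gives $d_1=2$ in case (i), as you anticipated.
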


\begin{proof}
	Let us show that   \eqref{3CSa} follows from \eqref{3CSb}. Let $C\in \Hyp_{\md}(B,2)$.
	If   $d_i=1$ for some $i$, then  $C$ is a line in the embedding in projective space determined by $B_i$, hence $C$ is rational. If $d_i\geq 2$ for  all $i$, we can apply \eqref{3CSb}. If  $B_2$ is not primitive,
	in the projective embedding  detemined by $B_1$, the curve $C$ is a conic, hence  is rational.
	If $B_2$ is   primitive, then for some $i\in \{1,2\}$  the embedding of $C$ given  by $B_i$    is a curve of degree $d_i$  with  a point of multiplicity $d_i-1$,  so it is  rational.

	To prove  \eqref{3CSb}, we first observe that for all $i\neq j$ and $p_{i,j}\in B_i\cap B_j$ we have
	$$\Hyp_{\geq 2}(B_i;p_{i,j})\cap  \Hyp_{\geq 2}(B_j; p_{i,j})=\emptyset.$$
	Indeed, by Lemma~\ref{S2}, any curve in the above intersection would have to be tangent to both $B_i$ and $B_j$ at $p_{i,j}$, which is not possible as $B_i$ and $B_j$ meet transversally.

	Let $C\in \Hyp_{\md}(B,2)$.  The preceeding observation implies   $C\cap B\subset N$. We claim that $|C\cap B_3|=1$. Indeed, if $|C\cap B_3|=2$ then $C\cap B=\{p_{1,3},p_{2,3}\}$ with  $p_{i,j}\in B_i\cap B_j$.
	Then $C\in \Hyp_{\geq 2}(B_1;p_{1,3})$, hence $C$ is not transverse to $B_1$ at $p_{1,3}$, hence it is transverse to $B_3$ at $p_{1,3}$;  the same  argument   switching $B_1$ with $B_2$ gives that $C$  is transverse to $B_3$ at $p_{2,3}$. Hence
	$$
		(C\cdot B_3)=\mult_{p_{1,3}}(C)+\mult_{p_{2,3}}(C),
	$$
	now Lemma~\ref{lm0}\eqref{lm02} implies that  $B_3$ is primitive, which is not possible.

	This argument shows that either  $C\cap B=\{p_{1,2},p_{1,3}\}$,   and
	\begin{equation}
		\label{eq-d1}
		d_1=\mult_{p_{1,2}}(C)+\mult_{p_{1,3}}(C)
	\end{equation}
	or   $C\cap B=\{p_{1,2},p_{2,3}\}$, with  $B_2$    primitive, and
	\begin{equation}
		\label{eq-d2}
		d_2=\mult_{p_{1,2}}(C)+\mult_{p_{2,3}}(C).
	\end{equation}
	Assume \eqref{eq-d1} holds.
	Set $m=\mult_{p_{1,3}}(C)$ so that $\mult_{p_{1,2}}(C)=d_1-m$.
	We claim that $m=1$.
	As $B_3$ is not primitive, we have $B_3=aH$ for some primitive, very ample $H$, and $a\geq 2$.
	If $(C\cdot H)=1$, then $C$ is a line with respect to $H$, hence $C$ is smooth and $m=1$.
	If  $(C\cdot H)\geq 2$
	we   apply Proposition~\ref{prop-genus} to $B_3$ and  $C$,  getting
	$$
		\delta_C(p_{1,3})\geq (m-1)((B_3\cdot C)-m)/2=(m-1)(a(C\cdot H)-m)/2.
	$$
	Now  set
	$$
		e =\min \{(C\cdot H), d_1\},
	$$
	so that
	\begin{equation}
		\label{eq-13}
		\delta_C(p_{1,3})\geq  (m-1)(ae -m)/2.
	\end{equation}
	Now, as $e$ is the degree of some embedding of $C$ in projective space, the arithmetic genus of $C$ satisfies
	\begin{equation}
		\label{eq-gene}
		p_a(C)\leq {{e-1}\choose{2}}.
	\end{equation}
	Moreover, as $p_{1,2}$ has multiplicity $d_1-m$,   we have
	\begin{equation}
		\label{eq-12}
		\delta_C(p_{1,2})\geq (d_1-m)(d_1-m-1)/2\geq (e-m)(e-m-1)/2.
	\end{equation}
	Assume, by contradiction, $m\geq 2$. From the above inequalities we obtain
	\begin{align*}
		g(C)\leq p_a(C)-\delta_C(p_{1,2})-\delta_C(p_{1,3}) & \leq \\
		{\binom{e-1}{2}}-(e-m)(e-m-1)/2-(m-1)(ae -m)/2      & =    \\
		{\binom{e-1}{2}}-(e^2-e-2me+ame-ae+2m)/2            & =    \\
		{\binom{e-1}{2}}-(e^2 +e(-1+  m(a-2)-a)+2m)/2       & \leq \\
		{\binom{e-1}{2}}-(e^2-3e+4)/2                       & =    \\
		(e^2-3e+2)/2-(e^2-3e+4)/2                           & <0   \\
	\end{align*}
	as $m\geq 2$ and $ a\geq 2$. This is impossible. Hence $m=1$.
	If $B_2$ is not primitive, then  \eqref{eq-d1} necessarily holds. Moreover,  we can apply the same argument with $B_2$ in place of $B_3$ and with $\mult_{p_{1,2}}(C)$ in place of $m=\mult_{p_{1,3}}(C)$. We thus get $\mult_{p_{1,2}}(C)=1=\mult_{p_{1,3}}(C)$, hence by \eqref{eq-d1}, we obtain $d_1=1+1=2$. This proves the first part of \eqref{3CSb}.

	For the last part of   \eqref{3CSb},  assume that $B_2$ is   primitive; then either  \eqref{eq-d1} or \eqref{eq-d2} holds.
	Hence, by the above argument,  for some $i\in \{1,2\}$ we have that $C$ has degree $d_i$ in the embedding given by $B_i$, and a point of multiplicity $d_i-1$ in some $p_{i,3}\in B_i\cap B_3$. The proof is complete.
\end{proof}

\begin{thm}
	\label{thm-nonprim}
	Let $B=B_1\cup B_2\cup B_3\subset S$ be a  3C-curve with  $B_i$   very ample  for $i=1,2,3$, and such that      $B_1$ is primitive and $B_2,B_3$ are not  primitive. Then
	$\cE(B)$ is finite, and    $(C\cdot B_1)\leq 2$ for every   $C\in \cE(B)$,
\end{thm}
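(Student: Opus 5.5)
The plan is to reduce to $\Hyp(B,2)$ and prove that this larger set is finite. By Proposition~\ref{3CS}\eqref{3CSa}, which applies because $B_1$ is primitive and $B_3$ is not, we have $\cE(B)=\Hyp(B,2)$. So it suffices to show that $\Hyp(B,2)$ is finite and that every $C\in\Hyp(B,2)$ satisfies $(C\cdot B_1)\leq 2$.

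Fix $C\in\Hyp_{\md}(B,2)$ with $d_i=(C\cdot B_i)$. Since $B_2$ and $B_3$ are not primitive, the observation preceding Lemma~\ref{lm0} gives $d_2,d_3\geq 2$. We split into the cases $d_1=1$ and $d_1\geq 2$.

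\emph{Case $d_1=1$.} Here $C$ is a line in the embedding given by $B_1$, so it meets $B_1$ in a single point $p$. Because $C$ meets all three components in at most two points, and each point of $B$ lies on at most two components, $p$ must lie in $N$; say $p\in B_1\cap B_j$ with $j\in\{2,3\}$. Let $k$ be the other index in $\{2,3\}$, and let $q$ be the second point, at which $C$ meets $B_k$. If $q\in B_2\cap B_3$, then $C\in\Hyp(B_2\cup B_3;q)$, which is empty by Proposition~\ref{S1}\eqref{S1a}. Hence $C\cap B_j=\{p\}$, so $C\in\Hyp(B_1\cup B_j;p)$. By Proposition~\ref{S1}\eqref{S1b} this set has at most one element. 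Since $N$ is finite, only finitely many curves arise in this case.

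\emph{Case $d_1\geq 2$.} Proposition~\ref{3CS}\eqref{3CSb}(i) gives $d_1=2$, which proves the degree bound. It also shows that $C\cap B=\{p,q\}$ with $p\in B_1\cap B_2$ and $q\in B_1\cap B_3$, that $C\cap B_2=\{p\}$ and $C\cap B_3=\{q\}$, and (from its proof) that $C$ is smooth at $p$ and at $q$. There are finitely many such pairs $(p,q)$, so we fix one. In the embedding $S\subset\PP^r$ given by $B_1$, the curve $C$ is a smooth conic through $p$ and $q$. By Lemma~\ref{S2}, $C$ is tangent to $B_2$ at $p$ and to $B_3$ at $q$. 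Hence the plane $\Pi=\langle C\rangle$ is determined: it is spanned by $p$, $q$ and the tangent line $T_pB_2$.

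If $\Pi\not\subset S$, then $S\cap \Pi$ is a curve with finitely many components, so there are only finitely many possible $C$. If $\Pi\subset S$, then $S=\Pi\cong\PP^2$ and $B_1$ is a line. The conics through $p$ and $q$ tangent to $T_pB_2$ and $T_qB_3$ then form a pencil. We must show that only finitely many members $C$ satisfy $(C\cdot B_2)_p=d_2=2\deg B_2$, where $2\deg B_2\geq 4$ because $B_2$, being non-primitive, has degree at least $2$.

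This last point is the step I expect to be the main obstacle. First I would use upper semicontinuity of $(C_t\cdot B_2)_p$ along the pencil; a member of the pencil meets $B_2$ properly at $p$ unless it contains $B_2$, which happens for at most finitely many members. The pencil contains the double line $2\overline{pq}=2B_1$. Since $B_1$ is transverse to $B_2$ at $p$, this member has local intersection multiplicity $2<4$ at $p$. Upper semicontinuity then implies that only finitely many members of the pencil reach multiplicity $\geq 4$ at $p$, which gives finiteness.

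For a general surface $S$ I would also try to phrase this finiteness uniformly through the same semicontinuity argument. The only delicate issue is making sure that the degenerate member of the pencil really lies in the family of cycles on $S$ being considered.
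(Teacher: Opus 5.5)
Your structure is the paper's: reduce to $\Hyp(B,2)$ via Proposition~\ref{3CS}\eqref{3CSa}, handle $d_1=1$ with lines in the $|B_1|$-embedding and Proposition~\ref{S1}, and handle $d_1=2$ by showing the conic's plane is fixed by the tangency data. However, your case $d_1=1$ contains a wrong step.

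\emph{The claim $p\in N$.} This does not follow from ``at most two points, each on at most two components''. That count only shows that \emph{one} of the two points of $C\cap B$ lies in $N$. It still allows $p\in B_1\setminus N$ with $q\in B_2\cap B_3$.

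\emph{The claim $C\in\Hyp(B_2\cup B_3;q)$.} When $p\in B_1\cap B_j$ and $q\in B_2\cap B_3$, this is false. Here $C$ meets $B_j$ at both $p$ and $q$, so $C\cap(B_2\cup B_3)=\{p,q\}$.

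In effect you attached Proposition~\ref{S1}\eqref{S1a} to the wrong configuration. It is exactly what excludes $p\notin N$, since there $C\cap(B_2\cup B_3)=\{q\}$ really holds. The configuration $C\cap B=\{p_{1,j},p_{2,3}\}\subset N$ is not excluded by anything you cite, and you silently discard it. The repair is immediate, and it is what the paper does: such a $C$ is the line in the $|B_1|$-embedding through two distinct points of the finite set $N$. So there are at most $(B_1\cdot B_2)(B_2\cdot B_3)+(B_1\cdot B_3)(B_2\cdot B_3)$ of them. Your other subcase ($q\notin N$, via Proposition~\ref{S1}\eqref{S1b}) is correct.

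For $d_1=2$ your argument matches the paper's. You fix the plane by $T_pB_2$ and $q$; the paper uses the two incident tangent lines. Your extra treatment of $\Pi\subset S$ is a genuine improvement. The paper's sentence that $P\cap S$ ``contains only finitely many conics'' fails when $S\cong\PP^2$ and $B_1$ is a line, because then $P=S$. Your pencil argument settles that case correctly. Along the pencil, the condition $(C_t\cdot B_2)_p\geq 4$ is a linear condition on the pencil parameter. It fails at the member $2B_1$, where the multiplicity is $2$, so at most one member satisfies it. Your closing paragraph about ``general $S$'' is unnecessary: when $\Pi\not\subset S$, finiteness is immediate because $\Pi\cap S$ is a proper closed subset of $\Pi$.
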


\begin{proof}
	By Proposition~\ref{3CS} we have $\cE(B)=\Hyp(B,2)$.
	Let $C\in \Hyp_{\md}(B,2)$, with, as usual, $\md=(d_1,d_2,d_3)$ and $d_i=(C\cdot B_i)$.

	We view  $S$ in  projective space  embedded    by the linear system $|B_1|$.

	Suppose  $d_1=1$, then $C$ is a line in our embedding.
	If $C\cap B\subset N$ then $C$ is one of the finitely many lines  through two points of $N$.  More exactly, since $d_1=1$ we have only two cases, either $C\cap B=\{p_{1,2},p_{2,3}\}$ or  $C\cap B=\{p_{1,3},p_{2,3}\}$ (with the usual convention $p_{i,j}\in B_i\cap B_j$).
	Hence  we have at most $(B_1\cdot B_2)(B_2\cdot B_3)+(B_1\cdot B_3)(B_2\cdot B_3)$ such lines.

	If $C\cap B\not\subset N$ then   there are two indices, $i,j$ and $p_{i,j}\in B_i\cap B_j$ such that
	$$C\in  \Hyp_{(d_i,d_j)}(B_i\cup B_j, p_{i,j}).$$
	By  Proposition~\ref{S1} the set $\Hyp_{(d_i,d_j)}(B_i\cup B_j, p_{i,j})$ is not empty only if one between $B_i$ and $B_j$ is primitive,
	therefore we must have $i=1$. Assume, with no loss of generality,  $j=2$;  then     $C$ is hypertangent to $B_2$ at $p_{1,2}$ and hypertangent to $B_3$ in a point not lying on $N$; since  $B_2$ and $B_3$   are not primitive, they are not lines in our embedding.  Therefore such a $C$, if it exists,   is the unique line in $S$  tangent to $B_2$ at $p_{1,2}.$
	Since we have $(B_1\cdot B_2)$ choices for $p_{1,2}$, and we can argue in the same way if $j=3$, we get at most $(B_1\cdot B_2)+(B_1\cdot B_3)$ possibilities for such a $C$.

	Now let $d_1=2$.   By Proposition~\ref{3CS} the curve $C$ is a conic  hypertangent to $B_2$ at $p_{1,2}$ and to $B_3$ at  $p_{1,3}$.  Therefore (as $B_i$ and $C$ lie in the smooth surface $S$) the tangent lines of $C$ and $B_i$ at $p_{1,i}$ coincide for $i=2,3$, in symbols, $T_{p_{1,2}}C=T_{p_{1,2}}B_2$ and $T_{p_{1,3}}C=T_{p_{1,3}}B_3$. Let  $P\cong \PP^2$ be the plane containing $C$; as $P$ contains the  tangent lines to $C$, it contains the lines
	$T_{p_{1,2}}B_2$ and $T_{p_{1,3}}B_3$, which therefore have to be incident. But then   $P$ is uniquely determined by   $T_{p_{1,2}}B_2$ and $T_{p_{1,3}}B_3$. Since the intersection of $P$ with $S$ contains only finitely many conics, we have finitely many curves such that $d_1=2$.

	By Proposition~\ref{3CS} \eqref{3CSa} the set  $\Hyp_{\md}(B,2)$ is empty if $d_1\geq 3$, so we are done\end{proof}

\begin{remark}\label{rk-gt}	 The hypotheses in Theorem~\ref{thm-nonprim} imply that   $(S,B)$ is of log general type. This follows from
	Fujita's Conjecture for surfaces (a well-known consequence of results of Reider \cite{Reider}, see \cite[Section 10.4]{Lazarsfeld2}).
	Indeed, as at least one component of $B$ is not primitive,  we have $K_S+B=K_S+\sum_{i=1}^4H_i$ with $H_i$ very ample for all $i$. Now Fujita's Conjecture states that $K_S+4H_i$ is very ample, hence $4(K_S+B)$  is also very ample, hence $K_S+B$ is ample.

	The same holds for Theorems~\ref{th:main} and \ref{th:hyperbolic}, since  this argument works if we replace ``4" with    $n\geq 4$. 	 \end{remark}
Comparing with the results obtained in \cite{CT} for $S = \PP^2$ it is natural to consider the following questions for further investigation.

\begin{question}

	Let $B=B_1\cup B_2\cup B_3\subset S$ be a  3C-curve with $B_i$ very ample for all $i=1,2,3$, $B_1$ and $B_2$ primitive, and $B_3$ not primitive. Is $\cE(B)$ finite?
\end{question}

If $S = \PP^2$ the answer is yes, see \cite[Proposition 3.2.1 and Theorem 3.3.2]{CT}. Moreover, on a slightly different vein, \cite[Theorem 3.3.4]{CT} shows that the answer to the next question is affirmative for $\PP^2$.

\begin{question}

	Let $B=B_1\cup B_2\cup B_3\subset S$ be a  3C-curve with $B_i$ very ample for all $i=1,2,3$, $B_1$ primitive, and $B_2, B_3$ not primitive. If the curve $B$ is general, is $\cE(B)$ empty?
\end{question}

Here   general   means that the components $B_1,B_2,B_3$ vary in a (Zariski) open subset of  $|B_1|\times |B_2|\times |B_3|$, where $|B_i|$ is the linear system associated to $B_i$.

\bibliographystyle{alpha}
\bibliography{references.bib}

\end{document}